\theoremstyle{definition}
\newtheorem{theorem}{Theorem}[section]
\newtheorem{corollary}{Corollary}[theorem]
\newtheorem{proposition}[theorem]{Proposition}
\newtheorem{definition}[theorem]{Definition}
\newtheorem{example}[theorem]{Example}
\newtheorem{remark}{Remark}
\title{Tarski's least fixed point theorem: \\ A predicative type theoretic formulation}
\author{Ian Ray \\
ibray1115@gmail.com}
\date{December 2023}
\DeclareMathOperator{\dB}{\downarrow^B}
\DeclareMathOperator{\dBV}{\downarrow^B_\mathcal{V}}
\DeclareMathOperator{\dA}{\downarrow^A}
\begin{document}

\maketitle

\begin{abstract}
    We provide a type theoretic treatment of the paper \say{On Tarski's fixed point theorem} by Giovanni Curi. There are benefits to having a type theoretic formulation apart from routine implementation in a proof assistant. By taking advantage of (higher) inductive types, we can avoid complicated set theoretic constructions. Arguably, this results in a presentation that is conceptually clearer. Additionally, due the predicative admissibility of (higher) inductive types we take a step towards the \say{system independent} derivation that Curi calls for in his conclusion. Finally, we explore a condition on monotone maps that guarantees they are `generated' and give an alternative statement of the least fixed point theorem in terms of this condition. 
\end{abstract}

\section*{Acknowledgments}
First and foremost I would like to thank Giovanni Curi. His correspondence was essential for getting my bearings in this area of mathematics. I would also like thank Tom de Jong. He has already established much of predicative and constructive order theory (as well as domain theory) in a univalent setting. Beyond this he has provided me with countless hours of discussion and insight. Without Tom this paper would not exist. Finally, I would like to thank Mart\'in Escard\'o, Ulrik Buchholtz, Jon Sterling, Jem Lord and many others from the Univalent Agda discord for small and large contributions to the present work.

\tableofcontents

\pagebreak

\section{Introduction}
\label{sec:1}
Fixed point theorems, although of great interest and significance, are notorious for their non-constructive nature (e.g. Brouwer's Fixed Point Theorem \cite{Shul2}). Certain fixed point theorems do admit intuitionistic proofs (see \cite{Coquand}). Of particular interest is the work \cite{Esc} which contains a constructive proof a Pataraia's fixed point theorem: an analog of Tarski's fixed point theorem for directed complete posets (DCPOs). However, some still view proofs like the ones above as not entirely constructive, due to their impredicative nature. What exactly is impredicativity? To assist explanation we focus our attention to one part of Tarski's fixed point theorem: every monotone map $f$ on a complete lattice $L$ has a least fixed point. Traditional approaches define the least fixed point to be the infimum over the set of `deflationary' points. Specifically, the least fixed point is
$$\bigwedge \{ x \in L \ | \ f(x) \leq x \}.$$
Note that the element we are trying to define is itself deflationary (as it is a fixed point) and thus an element of the very set we take the infimum over. This is an example of an impredicative construction. Alternatively, a predicative construction can be loosely characterized as one which builds an object up from things below. We will continue a critical discussion of predicativity in \Cref{sec:2}. 

Giovanni Curi provides a predicative proof of a variation to Tarski's least fixed point theorem \cite{Curi} using constructive set theory (CZF). In Section 2 of his paper Curi recounts a paradigm shifting result in predicative order theory: any complete lattice with small carrier is necessarily trivial (for a type theoretic proof, see \cite{Jong2021OnST}). For this reason we are forced to work with large carriers if we want to study non-trivial complete lattices in a predicative setting. It is worth explicating that in the framework of CZF the small/large dichotomy is encoded by sets and proper classes. Curi then restricts attention to complete lattices that have a set of generators (along with other strong assumptions) to salvage a least fixed point theorem. The imposed restrictions will allow one to build the least fixed point inductively. Of course, in the presence of impredicative axioms the additional assumptions are satisfied vacuously. Curi concludes by observing his works dependence on the set theoretic framework, even stating that a \say{system independent} derivation is desirable. 

Currently, we do not have the facilities to provide a derivation that is completely independent of any particular system. One may speculate that the notion of a predicative topos will provide canonical models of predicative mathematics --- much in the same way that elementary topoi provide models of intuitionisitic mathematics. Progress has been made in this direction, but there is no agreed upon notion yet in place (see \cite{denBerg}). Alternatively, a translation of \cite{Curi} into type theory may be a step towards such a system independent derivation; because type theory is a structural foundation. 

There are additional benefits of working in a type theoretic setting. First, inductive constructions are first class citizens; so we don't have to manually construct the object of interest as in \cite{Curi}. This allows us to cut away the complications that sideline the main arguments in \cite{Curi}. For this reason, a type theoretic derivation is arguably cleaner and more intuitive than the set theoretic counter part. Second, the proof can be routinely verified with a proof assistant. In fact, every stated definition and proposition in the present work has been implemented in Agda \cite{Esc2, Ian}. An off shoot of this second benefit is that one could, in theory, port the proof into Cubical Agda and compute fixed points. Third, a proof in MLTT* + HITs is valid in any Grothendieck $\infty$\nobreakdash-topos \cite{Shul3} (MLTT* = MLTT + FunExt + PropExt, but not including full univalence \cite{Uni,Rijke}). Finally, it is worth noting that the type theoretic proof is universe polymorphic (see \Cref{sec:3}) and thus, technically, more general than a proof in a set theoretic framework. We intend to work informally with type theory much like how mathematicians work informally with sets. The ideal audience of this paper is a (univalent) type theorist, but the informal style paired with no appeals to higher types will lend accessibility to any interested party.

In \Cref{sec:2} we commence a more careful discussion of predicativity. In \Cref{sec:3} we fix the type theoretic framework and some conventions we employ to avoid technicalities. If you are unfamiliar with type theory then you may reasonably skip \Cref{sec:3} and still understand the bulk of the paper. \Cref{sec:4} develops a bit of order theory. In particular, we define posets, sup lattices and the notion of a basis. \Cref{sec:5} describes, what Curi calls, abstract inductive definitions in the language of type theory. Abstract inductive definitions serve to carve out a least subset of the basis that has some nice closure properties. As we will explore in \Cref{sec:7} there is a correspondence between deflationary points and a certain class of subsets that possess similar closure properties. The least closed subset will be closely related to the least fixed point via this correspondence. In Curi's work this least closed subset is constructed manually via set theoretic axioms, while in type theory it manifests itself as a special quotient inductive type (QIT). In \Cref{sec:6,sec:8,sec:9} we impose further restrictions on abstract inductive definitions (viz.\ local and bounded) and sup lattices (viz.\ small presented). These restrictions serve to salvage a type theoretic formulation of the least fixed point theorem which is stated in \Cref{sec:10}. We conclude the paper by investigating a condition on monotone maps that guarantees they have a least fixed point.

\section{Note about Predicativity}
\label{sec:2}
Unfortunately, the history behind predicativism is murky and wrought with philosophical motivations. In fact, there are many conflicting notions of predicativity; each with their own supporters and justifications. Classical notions of predicativity where explored by Poincar\'e, Russell and Weyl \cite{Cros3, Cros}. More recently Kreisel, Feferman and Sch$\ddot{\text{u}}$tte provided a proof theoretic analysis of predicativity \cite{Cros2, Cros}. When we say `predicative' here we mean what is often referred to as constructive or generalised predicativity \cite{Cros, Cros3}. A generalised predicative system goes beyond Fefferman's predicativity. In particular, generalised predicative systems have infinitary inductive types. 

The systems of Constructive Zermelo Fraenkel set theory (CZF) and Martin L$\ddot{\text{o}}$f type theory (MLTT) are examples of such predicative systems. The Powerset axiom is an example of an impredicative principle and as such is not included in CZF. Similarly, the principle of propositional resizing is viewed as impredicative and thus not included in MLTT or any extensions we consider. In the absence of these strong principles we instead rely on inductive constructions. These constructions are argued to be philosophically sound with respect to such predicative systems \cite{Cros}. When working in extensions of MLTT we may apply similar philosophical arguments to justify the predicative admissibility of higher inductive types (HITs) as well. One may wish to avoid philosophical justifications entirely. One such justification would be a reduction of type theory extended by HITs to a system that is accepted as predicative. Such reductions have yet to be completely spelled out, but do seem possible. For example, if \cite{Jon} could be extended to the schemas in \cite{Coq2} in a constructive and predicative meta theory then we may achieve such a reduction (this observation is due to Ulrik Buchholtz). 

Now that we have clarified the meaning of predicativity we want to return to the discussion of the standard formulation Tarski's least fixed point theorem: every monotone map $f$ on a complete lattice $L$ has a least fixed point. The standard construction (see \Cref{sec:1}) does not work in systems like CZF or MLTT*. Consider a non-trivial complete lattice $L$ and a monotone map $f : L \to L$, and observe that, since the carrier $L$ is necessarily large, the collection $\{x \in L \ | \ f(x) \leq x\}$ is not provably small and as such the join does not exist. In \cite{Curi} a predicative formulation of Tarski's least fixed point theorem is proved. Although, as we shall see, this formulation contains serious concessions. It is very important to clarify: we have only argued that the standard construction is not predicatively viable, but it may be that some other construction is. It seems unlikely that there is a predicative proof of the standard formulation of Tarski's least fixed point theorem, but ideally we would show that such a proof is impossible. We are actively exploring the prospect of showing that the standard formulation of Tarski's least fixed point theorem is too strong of a result to have in a predicative theory --- a predicative taboo so to speak. One could do this by showing that the standard formulation implies some impredicative axiom or by providing a model where the standard formulation fails. Showing that the standard formulation of Tarski's least fixed point theorem is a predicative taboo would fully justify the content of this paper and that of \cite{Curi}. Regardless, we still feel that the failure of the standard construction is sufficient motivation to investigate a predicative version of Tarski's least fixed point theorem.

\section{Type Theoretic Framework}
\label{sec:3}
One of the main goals of this paper is to maintain accessibility. For this reason, we will work informally with type theory much like a traditional mathematician works with set theory. Thus, in many occasions the technical type theoretic details of a proof will be underspecified. The purpose of this section is to assist in filling in these technical details to the interested parties and as such can be reasonably skipped by non-type theorists. 

In traditional treatments of type theory one would find closure properties of universes as well as type formation, introduction and elimination rules stated in the language of natural deduction. We assume familiarity with such formal treatments (for more see \cite{Rijke, Uni}). Our starting point is Martin Löf Type Theory (MLTT): which has an empty type ($\boldsymbol{0}$), a unit type ($\boldsymbol{1}$), the natural numbers ($\mathbb{N}$), binary coproducts ($+$), dependent pair types ($\sum$), dependent functions types ($\prod$), intensional identity types ($=$) and general inductive types ($\text{W}$). We assume the existence of a base universe $\mathcal{U}_0$ as well as the operations successor $(\_)^+$ and join $\_ \sqcup \_$ which satisfy the expected definitional equalities (see \cite[Section 2.1]{Jong2023DomainTI}). We attain an infinite tower of universes $\mathcal{U}_0 , \mathcal{U}_1 , \dots$ but we will often work with arbitrary universes such as $\mathcal{U, V, W, T}, \dots$. We do not assume cumulativity of universes (e.g. for any $A : \mathcal{U}_i$ we have $A : \mathcal{U}_{i+1}$), but instead we employ a lifting operation: $\text{Lift}_{\mathcal{U,V}} : \mathcal{U} \to \mathcal{U \sqcup V}$ (see \cite[Section 2.1]{Jong2023DomainTI}). Universes will be presented à la Russell for convenience but should be understood to have an encoding à la Tarski. In the current state of development univalence is not required, but we do assume propositional and function extensionality as well as propositional truncation. There are some nuances in our base type theory that we should explicate. To maintain predicativity, we do not assume any resizing principles except explicitly in \Cref{cor:Impredicative-Least-Fixed-Point}. The propositional resizing principle, $\text{Prop-Resizing}_{\mathcal{U}, \mathcal{V}}$, states that any proposition in the universe $\mathcal{U}$ is equivalent to a proposition in the universe $\mathcal{V}$ (see \cite{Jong2021OnST}). Additionally, we assume the existence of a special sort of quotient inductive type (QIT). Such an assumption may at first seem excessive, but investigation of this QIT reveals it is extremely tame comparatively. We will revisit this discussion in Section 4. The notion of equivalence serves as the standard notion of sameness in type theory (for a precise definition see Section 9 of \cite{Rijke}). We say a function $f : A \to B$ is an equivalence when it has a left and right inverse and denote the type of such equivalences as $A \simeq B$. Finally, given universes $\mathcal{U}$ and $\mathcal{V}$, we say that a type $Y : \mathcal{U}$ is $\mathcal{V}$-small if there is a type $X : \mathcal{V}$ such that $X \simeq Y$ and a type $Y : \mathcal{U}$ is locally $\mathcal{V}$-small if $x = y$ is $\mathcal{V}$-small for all $x, y : Y$. In our type theoretic formulation the small/large dichotomy is encoded via those types that are $\mathcal{V}$-small and those that are not (in \Cref{sec:3} we fix some universes and the significance of the universe $\mathcal{V}$ will become clear).

We now discuss some conventions that will allow us to avoid technical type theoretic proofs. For example, the notion of transport is often motivated by the principle of indiscernibility of identicals. This principle, often taken for granted by mathematicians, says identified elements can be substituted for each other in expressions. Another type theoretic concept, often taken for granted, is the application of $f : X \to Y$ to an identification $x = x'$ which yields an identification $f(x) = f(x')$. As we are doing set-level mathematics we will employ such notions tacitly. Propositional truncation is a somewhat technical part of any formal treatment of type theory (see Section 14 of \cite{Rijke}). In particular, given a type $A$ we have a proposition $||A||$ called the propositional truncation of $A$. Given a proposition $Q$ we can define a map $|| A || \to Q$ by giving a map $A \to Q$. Propositional truncations can be formulated via universal properties or recursion principles. The observant reader will notice that when we must define a map out of a truncated type, the codomain is almost always a proposition. For that reason we will not explicitly mention that we are applying propositional truncation recursion and we will leave notions that are defined in terms of truncation (like surjections) underspecified. In the same vein we do not specify when we are applying propositional or function extensionality.

We will frequently work with subsets in this paper and as such we need to address how formal we wish to be. In a standard treatment, ignoring universes, a subset of a type $X$ is a term $S$ in the powerset of $X$, $\mathcal{P}(X) :\equiv X \to \text{Prop}$. We write $x \in S$ for the underlying type of $S(x)$. Given two subsets $S, S' : \mathcal{P}(X)$ we write $S \subseteq S'$ for the type of dependent functions $\prod_{x : X} x \in S \to x \in S'$. We wish for mathematicians who are unfamiliar with the type theoretic encoding to follow our arguments. For this reason we will avoid using lambda notation when talking about subsets. For example, we define the union, $\bigcup \alpha$, of a family of subsets $\alpha : I \to \mathcal{P}(X)$, as $x \mapsto \exists i : I, x \in \alpha(i)$.
When $X$ is a set we can define the singleton $\{a\}$, for some $a : X$, as $x \mapsto a = x$. Finally, since we define suprema (see \Cref{sec:3}) in terms of families rather than subsets we also need to fix some notation that will unify the two concepts. Given a subset $S : \mathcal{P}(X)$ we define the total space of $S$, $\mathbb{T}(S) : \equiv \Sigma_{x : X} x \in S$, with inclusion into $X$ given by $\text{inc}_X :\equiv \text{pr}_1 : \mathbb{T}(S) \to X$. It is worth recalling that using sigma notation in this way is analogous to set builder notation (e.g.\ $\{ x \in X \ | \ x \in S \}$). This will inform many of our translations of Curi's work into type theory. We will often conflate a subset and its total space to aid in readability, but one should bare in mind the technical difference between them. Finally, since we are working predicatively we will not ignore universe levels. For every universe $\mathcal{U}$, there is a type of propositions $\Omega_\mathcal{U} :\equiv \sum_{A : \mathcal{U}} \text{is-prop} (A)$. For any type $X$ we may consider the $\mathcal{U}$-powerset of $X$, $\mathcal{P}_\mathcal{U}(X) :\equiv X \to \Omega_\mathcal{U}$. We call the elements of $\mathcal{P}_\mathcal{U}(X)$ the $\mathcal{U}$-valued subsets of $X$. The above discussion extends to $\mathcal{U}$-valued subsets as well.

\section{Sup Lattices and Small Bases}
\label{sec:4}
We commence this section by briefly recounting the notion of a set generated complete lattice as formulated in \cite{Curi}. A complete lattice $L$ is set-generated if it has a subset $B \subseteq L$ such that, for any $x \in L$,
\begin{enumerate}
    \item $\downarrow^B x = \{ b \in B \ | \ b \leq x \} $ is a set.
    \item $x = \bigvee \downarrow^B x$.
\end{enumerate}

We now work towards translating this notion into type theory. As we have established, it is well known that in a predicative setting there are no non-trivial examples of small (sufficiently) complete posets (see \cite{Jong2021OnST}). Thus, when working predicatively it is necessary to define sup lattices, and by extension, posets, in a universe polymorphic manner. 

\begin{definition}
    \label{def:poset}
    A \textbf{partially ordered set} (Poset) consists of a type $P : \mathcal{U}$ and a propositional valued relation $\_ \leq \_ : P \to P \to \mathcal{W}$ which satisfies:
    \begin{enumerate}
        \item $x \leq x$ (reflexivity)
        \item $x \leq y \to y \leq x \to x = y$ (anti-symmetry)
        \item $x \leq y \to y \leq z \to x \leq z$ (transitivity)
    \end{enumerate}
    for any $x, y, z : P$. 
\end{definition}    

The anti-symmetry assumption guarantees that $P$ is a set (see Lemma 3.2.3 of \cite{Jong2023DomainTI} for a proof following from Hedberg's Lemma). Given universes $\mathcal{U}$ and $\mathcal{W}$ we denote the type of Posets with carrier in $\mathcal{U}$ and order valued in $\mathcal{W}$ as $\text{Poset}_{\mathcal{U}, \mathcal{W}}$.

\begin{definition}
    \label{def:endomap}
    Given a poset $P : \mathcal{U}$ an endomap $f : P \to P$ is \textbf{monotone} if for any $x , y : P$ with $x \leq y$ we have $f(x) \leq f(y)$.
\end{definition}  

Of course it is easy enough to extend this to functions between different posets, but we focus our attention to endomaps as our goal is to study fixed points. We now define the notion of sup lattice. We follow the reasonable convention of \cite{Jong2023DomainTI} in phrasing completeness with respect to small families rather than subsets.

\begin{definition}
    \label{def:sup-lattice}
    A \textbf{sup lattice} is a poset $L : \mathcal{U}$ and $\_ \leq \_ : L \to L \to \mathcal{W}$ that has joins for all small families. That is, families $\alpha : I \to L$ with $I : \mathcal{V}$. We denote the join as $\bigvee \alpha$ but, if $\alpha$ is understood we will denote the join simply as $\bigvee I$.
\end{definition}   

Of course, we extend the notion of monotone endomaps to sup lattices. Given universes $\mathcal{U}$, $\mathcal{W}$ and~ $\mathcal{V}$ we denote the type of sup lattices with carrier in $\mathcal{U}$, order valued in $\mathcal{W}$ and joins of families indexed in $\mathcal{V}$ as $\text{Sup-Lattice}_{\mathcal{U}, \mathcal{W} , \mathcal{V}}$, but often times we will simply say such a lattice is a $\mathcal{V}\text{-sup-lattice}$ and omit the other universes. As stated in \Cref{sec:3}, we use the universe $\mathcal{V}$ as a point of reference for our relative notion of smallness and our language will often reflect this. In many concrete cases $\mathcal{U} \equiv \mathcal{V}^+$ and $\mathcal{W} \equiv \mathcal{V}$ (see \Cref{ex:pow-basis}). Notice that if $\mathcal{W} \equiv \mathcal{V}$ then anti-symmetry implies that $L$ is locally $\mathcal{V}$-small.

We now define, analogous to the notion of a generating set from \cite{Curi}, what it means for a sup lattice to have a basis. First recall, as stated in \Cref{sec:3}, given a type $X : \mathcal{U}$ and a subset $U : \mathcal{P}_\mathcal{T} (X)$ the total space of $U$ is $\mathbb{T}(U) :\equiv \sum_{x : X} x \in U$. 

\begin{definition}
    \label{def:basis}
    A $\mathcal{V}\text{-sup-lattice}$ $L$ has a \textbf{basis} provided there is a type $B : \mathcal{V}$ and a map $\beta : B \to L$ such that, for all $x : L$, the following is satisfied:
    \begin{enumerate}
        \item $\beta(b) \leq x$ is $\mathcal{V}\text{-small}$ for all $b : B$ and $x : L$.
        \item The family $\beta \circ \text{pr}_1 : (\sum_{b : B} \beta(b) \leq x) \to L$ has $x$ as its supremum.
    \end{enumerate}
    We now establish some helpful notation: first define $b \leq^B x$ to be the small type equivalent to $\beta(b) \leq x$ in condition 1. Then define $\dB x : \equiv \sum_{b : B} (\beta(b) \leq x)$ and $\dBV x : \equiv \sum_{b : B} (b \leq^{B} x)$ as well as inclusions into $B$ which are given by the first projection $\text{pr}_1$. Finally, we have an equivalence $\dBV x \simeq \dB x$ which is easily constructed from the equivalence between second components. With this notation established we can write condition 2 more suggestively as 
    \begin{enumerate}
        \item[2'.] $x = \bigvee \dB x$
    \end{enumerate}
    Finally, if a $\mathcal{V}\text{-sup-lattice}$ $L$ has a basis then we say it is $\mathcal{V}$\textbf{-generated}.
\end{definition}

We will typically conflate the two orders $\_ \leq \_$ and $\_ \leq^{B} \_$ and corresponding total spaces, as well as the joins of either. This is justified by \Cref{cor:equiv-same-sup} -- which essentially says that equivalences preserve joins. Notice, we have a subset $\_ \leq^B a : \mathcal{P}_\mathcal{V}(B)$ such that $\dB a \equiv \mathbb{T}(\_ \leq^B a)$. We will often conflate the total space $\dB a$ and this subset, and similarly for $\dBV a$.

\begin{example}
    \label{ex:pow-basis}
    Given a set $A : \mathcal{V}$, the powerset $\mathcal{P}_\mathcal{V} (A)$ is an example of a $\mathcal{V}$-generated sup lattice ordered by subset inclusion with joins given by arbitrary unions. Notice that $A$ together with the map $\{\_\} : A \to \mathcal{P}_\mathcal{V}(A)$ is a basis, where $\{a\}$ is defined only because $A$ is a set. For condition 1, note that for any $a : A$ and $X : \mathcal{P}_\mathcal{V}(A)$ the type $\{a\} \subseteq X$ is $\mathcal{V}$-small. For condition 2, notice
    $$\bigcup \dA X = X$$
    since $x \in \bigcup \dA X \equiv \exists (a,\_) : \downarrow^A X, x \in \{a\}$ is equivalent to $\exists a : A, \{a\} \subseteq X \land x \in \{a\}$ which is equivalent to $x \in X$. For details on the type theoretic encoding of unions see \Cref{sec:3}.

    There is an alternative basis for $\mathcal{P}_\mathcal{V} (A)$. For any type $X$ there is a type, $\text{List}(X)$, of finite lists of elements of $X$. $\text{List}(X)$ is inductively generated by the empty list and concatenation (see \cite[Chapter 6]{Uni}. Consider the map $\beta : \text{List}(A) \to \mathcal{P}_\mathcal{V} (A)$ defined as follows
    \begin{align*}
        [ \ ] &\mapsto \emptyset \\
        x :: l &\mapsto \{x\} \cup \beta(l).
    \end{align*}
    It is routine to show that $\beta : \text{List}(A) \to \mathcal{P}_\mathcal{V} (A)$ is also a basis for $\mathcal{P}_\mathcal{V} (A)$.
\end{example}

We will record the fact that taking the supremum preserves containment of subsets. 

\begin{proposition}
    \label{prop:subset-preserve-sup}
    Consider a $\mathcal{V}$-sup-lattice L a type $A : \mathcal{V}$ and a map $m : A \to L$. Further suppose we have $S, R : \mathcal{P}_\mathcal{V}(A)$ with $S \subseteq R$ then $\bigvee S \leq \bigvee R$. Here $m \circ \text{inc}_A : \mathbb{T}(S) \to L$ is the map we are leaving implicit and similarly for $\mathbb{T}(R)$. 
\end{proposition}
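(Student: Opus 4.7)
The plan is to use the universal property of the supremum directly. Recall that $\bigvee S$ is by definition the least upper bound of the family $m \circ \text{pr}_1 : \mathbb{T}(S) \to L$, so to establish $\bigvee S \leq \bigvee R$ it suffices to show that $\bigvee R$ is an upper bound for this family, i.e.\ that $m(a) \leq \bigvee R$ for every $(a,p) : \mathbb{T}(S)$.

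First, I would unfold the containment hypothesis $S \subseteq R$, which in our type-theoretic encoding is a term of $\prod_{a : A}\, a \in S \to a \in R$. Given $(a, p) : \mathbb{T}(S)$ with $p : a \in S$, applying this hypothesis yields $p' : a \in R$, and hence a term $(a, p') : \mathbb{T}(R)$. In other words, $S \subseteq R$ induces a reindexing map $\sigma : \mathbb{T}(S) \to \mathbb{T}(R)$ over $A$ in the sense that $\text{pr}_1 \circ \sigma = \text{pr}_1$, so the family $m \circ \text{pr}_1 : \mathbb{T}(S) \to L$ factors through $m \circ \text{pr}_1 : \mathbb{T}(R) \to L$.

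Now by the defining upper bound property of $\bigvee R$, for every $(a, p') : \mathbb{T}(R)$ we have $m(a) \leq \bigvee R$. Specializing this to $\sigma(a,p) = (a, p')$ gives $m(a) \leq \bigvee R$ for every $(a,p) : \mathbb{T}(S)$. Thus $\bigvee R$ is an upper bound of the family indexed by $\mathbb{T}(S)$, and by the least-upper-bound property of $\bigvee S$ we conclude $\bigvee S \leq \bigvee R$.

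There is no real obstacle here; the only subtlety is bookkeeping about the fact that, under our convention, the supremum of a subset is really the supremum of the family obtained by composing $m$ with the first projection out of its total space. Once one is comfortable with the fact that $S \subseteq R$ literally provides a function $\mathbb{T}(S) \to \mathbb{T}(R)$ commuting with projection to $A$, the claim is immediate from the universal property.
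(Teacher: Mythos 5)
Your proof is correct and follows essentially the same route as the paper's: show $\bigvee R$ is an upper bound of the family indexed by $\mathbb{T}(S)$ by pushing each witness through the containment $S \subseteq R$, then invoke the least-upper-bound property of $\bigvee S$. The extra bookkeeping you spell out about the reindexing map $\sigma : \mathbb{T}(S) \to \mathbb{T}(R)$ is implicit in the paper's terser proof but is the same idea.
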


\begin{proof}
    Clearly, both suprema exist so it suffices to show that $\bigvee R$ is an upper bound of $S$. To this end let $b : B$ with $b \in S$. Of course, since $S \subseteq R$ we may conclude $b \in R$ and thus, $m(b) \leq \bigvee R$. 
\end{proof}

In line with earlier observations; it is well-known that large lattices containing all small suprema do not necessarily contain all small infima. One motivation for working with a basis is that the resulting sup lattices are better behaved. In particular we have the following result. 

\begin{proposition}
    A $\mathcal{V}$-generated sup lattice $L$ with a basis $\beta : B \to L$ has all infima. 
\end{proposition}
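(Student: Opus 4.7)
The plan is to mimic the classical construction of an infimum as a supremum of lower bounds, but restricted to lower bounds that come from the basis, so that smallness is preserved. Given a small family $\alpha : I \to L$ with $I : \mathcal{V}$, I would define
$$B_\alpha :\equiv \sum_{b : B} \prod_{i : I} b \leq^B \alpha(i),$$
which is $\mathcal{V}$-small because $B : \mathcal{V}$, $I : \mathcal{V}$, and each $b \leq^B \alpha(i)$ is $\mathcal{V}$-small by condition~1 of \Cref{def:basis}. Then set $\bigwedge \alpha :\equiv \bigvee (\beta \circ \mathrm{pr}_1 : B_\alpha \to L)$, which exists because $B_\alpha$ is $\mathcal{V}$-small.

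The next step is to verify the two universal properties. For the lower bound property, fix $i : I$; for each $(b, p) : B_\alpha$ we have $\beta(b) \leq \alpha(i)$ by unfolding $p(i)$ through the equivalence $b \leq^B \alpha(i) \simeq \beta(b) \leq \alpha(i)$, so $\alpha(i)$ is an upper bound of the family $\beta \circ \mathrm{pr}_1$, giving $\bigwedge \alpha \leq \alpha(i)$. For the greatest lower bound property, suppose $y : L$ satisfies $y \leq \alpha(i)$ for all $i : I$. By condition~2 of \Cref{def:basis} we may rewrite $y = \bigvee \dB y$, so it suffices to show $\beta(b) \leq \bigwedge \alpha$ for every $b : B$ with $\beta(b) \leq y$. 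For such a $b$, transitivity gives $\beta(b) \leq \alpha(i)$ for every $i$, hence (via the equivalence) the pair $(b, \_) $ is an element of $B_\alpha$, so $\beta(b) \leq \bigvee B_\alpha = \bigwedge \alpha$.

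The only subtle point is smallness: one needs $\prod_{i : I} b \leq^B \alpha(i)$ to be $\mathcal{V}$-small, which follows from the fact that $\mathcal{V}$ is closed under $\prod$-types over $\mathcal{V}$-indexed families of $\mathcal{V}$-small types. Everything else is a routine application of \Cref{prop:subset-preserve-sup} (or direct use of the universal property of joins) and the defining property of a basis, with no further obstacle.
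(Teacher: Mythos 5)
Your proof is correct and follows essentially the same approach as the paper: both define the candidate infimum as the supremum of the basis elements below every $\alpha(i)$ (you phrase this via the total space $B_\alpha$, the paper via the $\mathcal{V}$-valued subset $x \mapsto \prod_{i:I} x \leq^B \alpha(i)$, which is the same data), and both verify the greatest-lower-bound property by rewriting an arbitrary lower bound $y$ as $\bigvee \dB y$ and showing the defining family dominates it. The only difference is cosmetic: the paper invokes \Cref{prop:subset-preserve-sup} to get $\bigvee \dBV y \leq \bigvee X$, whereas you unfold the same inequality element by element.
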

            
\begin{proof}
    Consider a family $\alpha : I \to L$. A classical proof would proceed by defining the inf of $I$ to be the sup of all the lower bounds of $I$. This argument can be replicated if we take advantage of our basis. Consider the subset $X : \mathcal{P}_\mathcal{V}(B)$ defined as $x \mapsto \prod_{i : I} x \leq^B \alpha(i))$ (which can be mapped to the carrier via $\beta \circ \text{inc}_B : \mathbb{T} (X) \to L$). We claim that $\bigvee X$ is the infimum of $I$. 
    
    We first show that $\bigvee X$ is a lower bound of $I$. For $i : I$, we must show $\bigvee X \leq \alpha(i)$. By construction of $X$, $\alpha(i)$ is an upper bound of $X$, so the desired results follows from the least upper bound condition of $X$. 
    
    We now show that $\bigvee X$ satisfies the greatest lower bound condition. Let $l$ be any other lower bound of $I$ and notice, by transitivity, $b \leq^B l \to \prod_{i : I} (b \leq^B \alpha(i))$ for any $b : B$. So we have $\dB l \subseteq X$ and
    $$ l = \bigvee \dBV l \leq \bigvee X, $$
    where equality follows from Condition 2 of \Cref{def:basis} and the inequality from \Cref{prop:subset-preserve-sup}.
\end{proof}

The next theorem and corollary are essential for our development. It says that reindexing families along a surjection (or an equivalence) does not change the supremum.

\begin{proposition}
    \label{prop:surj-same-sup}
    Consider a $\mathcal{V}$-sup-lattice $L$ and types $X : \mathcal{V}$ and $Y : \mathcal{T}$. Further suppose there is a map $m : Y \to L$ and a surjection $s : X \twoheadrightarrow Y$ then $\bigvee m \circ s$ (which exists by assumption) is also the supremum of the family $m : Y \to L$ (which does not exist in general). Since the maps are understood we could more clearly say $\bigvee X = \bigvee Y$.
\end{proposition}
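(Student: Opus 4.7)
The plan is to show that $\bigvee (m \circ s)$ satisfies the two defining conditions of a supremum for the family $m : Y \to L$: it is an upper bound of $m$, and it is below any other upper bound of $m$. Note that $\bigvee (m \circ s)$ exists by assumption because $X : \mathcal{V}$ is a valid index type, whereas $\bigvee m$ is a priori not given (since $Y$ lies in an arbitrary universe $\mathcal{T}$), so the claim is really that this alternative join computes what we want.

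For the upper bound condition, I would fix $y : Y$ and aim to prove $m(y) \leq \bigvee (m \circ s)$. Since this goal is a proposition (the order is proposition-valued by \Cref{def:poset}), I can apply propositional truncation recursion to the surjectivity witness $\|\,\Sigma_{x : X}\, s(x) = y\,\|$ given by $s$. Given an actual $x : X$ with $s(x) = y$, transporting along this identification reduces the goal to $(m \circ s)(x) \leq \bigvee (m \circ s)$, which holds by the upper bound part of the supremum of $m \circ s$.

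For the least upper bound condition, suppose $u : L$ is any upper bound of $m$, meaning $m(y) \leq u$ for every $y : Y$. I want $\bigvee (m \circ s) \leq u$, and by the least upper bound property of $\bigvee (m \circ s)$ it suffices to check that $u$ is an upper bound of the family $m \circ s : X \to L$. But for each $x : X$ we have $(m \circ s)(x) = m(s(x)) \leq u$ directly from the hypothesis on $u$, with no truncation needed.

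The only mildly delicate step is the first one, where surjectivity is only an existence up to propositional truncation; but since the order relation is proposition-valued, the recursion principle for $\|-\|$ applies immediately and no choice-like principle is invoked. Everything else is a direct unwinding of the universal property of suprema.
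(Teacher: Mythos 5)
Your proof is correct and follows essentially the same route as the paper: verify the upper-bound condition by pulling back along the surjection (using truncation-recursion into the proposition-valued order, a step the paper leaves implicit per its stated conventions in \Cref{sec:3}), and verify the least-upper-bound condition by noting that any upper bound of $m$ is automatically an upper bound of $m \circ s$.
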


\begin{proof}
    To show that $\bigvee X$ is also the supremum of the family $m : Y \to L$ it suffices to show it is an upper-bound of the family $m$ and that it satisfies the least upper-bound condition for the family $m$. For the former, let $y : Y$ with the intent to show $m(y) \leq \bigvee X$. Since $s$ is a surjection we have an $x : X$ such that $s(x) = y$. It now follows that $m(y) = m(s(x)) \leq \bigvee X$ since by assumption $\bigvee X$ is an upper-bound of $m \circ s$. For the latter, let $u : L$ be any other upper-bound of the family $m : Y \to L$. To show $\bigvee X \leq u$ it suffices to show $u$ is an upper-bound of $m \circ s$, but this follows immediately.
\end{proof}

\begin{corollary}
    \label{cor:equiv-same-sup}
    Consider a $\mathcal{V}$-sup-lattice L and types $X : \mathcal{V}$ and $Y : \mathcal{T}$. Further suppose there is a map $m : Y \to L$ and an equivalence $e : X \simeq Y$ then $\bigvee X = \bigvee Y$.
\end{corollary}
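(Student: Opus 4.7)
The plan is to deduce this directly from \Cref{prop:surj-same-sup}, since every equivalence is in particular a surjection. So the entire proof reduces to constructing a surjection from the given equivalence and then citing the preceding proposition.

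More concretely, I would first unpack $e : X \simeq Y$ to obtain its underlying map $e_0 : X \to Y$ together with the data witnessing that it is an equivalence. Since $e_0$ has a (quasi-)inverse $e_0^{-1} : Y \to X$ with $e_0 \circ e_0^{-1} \sim \mathrm{id}_Y$, for every $y : Y$ the fibre of $e_0$ over $y$ is inhabited by $(e_0^{-1}(y), h(y))$, where $h$ is the relevant homotopy. Truncating, this shows $\prod_{y : Y} \|\mathrm{fib}_{e_0}(y)\|$, which is the definition of $e_0$ being a surjection. Hence $e_0 : X \twoheadrightarrow Y$.

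Then I would apply \Cref{prop:surj-same-sup} to the map $m : Y \to L$ and the surjection $e_0 : X \twoheadrightarrow Y$. The proposition gives that $\bigvee (m \circ e_0)$, which is exactly $\bigvee X$ under the conflation of a family with its supremum established earlier, is also the supremum of the family $m : Y \to L$, i.e.\ $\bigvee Y$. Thus $\bigvee X = \bigvee Y$.

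There is no real obstacle here; the only subtlety worth mentioning is the universe bookkeeping: the hypothesis $X : \mathcal{V}$ is what makes $\bigvee (m \circ e_0)$ exist in the $\mathcal{V}$-sup-lattice $L$ in the first place, while $Y$ may live in a larger universe $\mathcal{T}$ so that $\bigvee Y$ would not exist a priori. The corollary is precisely the statement that, despite this universe mismatch, the supremum indexed by $Y$ does exist and coincides with the one indexed by $X$.
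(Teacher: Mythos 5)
Your proposal is correct and is essentially the paper's own proof, which simply observes that every equivalence is a surjection and then invokes \Cref{prop:surj-same-sup}; you have just spelled out the standard argument for why an equivalence is a surjection (exhibiting a preimage via the inverse and truncating) and flagged the universe bookkeeping, both of which the paper leaves implicit.
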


\begin{proof}
    Observe that any equivalence is a surjection so the result follows by \Cref{prop:surj-same-sup}.
\end{proof}

\section{Inductive Generators}
\label{sec:5}
The notion of an abstract inductive definition considered in \cite{Curi} generalizes the inductive definitions in \cite{Aczel}. Given a complete lattice $L$ with generating subset $B$ consider a subset $\Phi \subseteq B \times L$. In some sense $\Phi$ can be thought of as inductively defining a subset of $B$. To make this precise in set theoretic language we have to define appropriate closure conditions and then proceed to prove that there is in fact a subset satisfying these conditions. Curi does exactly that in \cite{Curi}. We briefly recount the desired closure conditions but for obvious reasons do not attempt to construct the desired subset. Consider a subclass $Y \subseteq B$,
\begin{enumerate}
    \item $Y$ is $\text{c}_L$-closed if for every subset $U \subseteq Y$ we have that $\dB \bigvee U \subseteq Y$.
    \item $Y$ is $\Phi$-closed if for every $(b , a) \in \Phi$ then $\dB a \subseteq Y \implies b \in Y$.
\end{enumerate} 
We denote the least closed class, under the $\text{c}_L$ and $\Phi$ closure conditions, as $\mathcal{I}(\Phi)$. In \cite{Curi}, Curi proves that $\mathcal{I}(\Phi)$ always exists in CZF.

We will now translate this notion into type theory. Given a $\mathcal{V}\text{-sup-lattice}$ with a $\mathcal{V}\text{-basis}$ we can encode the notion of abstract inductive definitions à la CZF. Admittedly, the terminology is a bit unfortunate here as inductive definitions/constructions are first class in type theory and thus take on a more general meaning. To ameliorate this we will call them inductive generators or simply generators. We will also depart from Curi's notation slightly, by using $\phi$ and $\mathcal{I}_\phi$, in the interest of differentiating between set theoretic and type theoretic constructions.

\begin{definition}
    \label{def:BSG}
    Given $\mathcal{V}\text{-Sup-Lattice}$ $L : \mathcal{U}$ with a $\mathcal{V}\text{-basis}$ $\beta : B \to L$ we define an \textbf{inductive generator} to be a subset $\phi : \mathcal{P}_\mathcal{U \sqcup V^+} (B \times L)$.
\end{definition}

This section contains some unavoidable use of type theoretic concepts (see \cite[Chapter 6]{Uni}). Using a generator $\phi$ as a parameter we construct a special higher inductive type (HIT) family. HITs are an active area of study so we will quickly comment on exactly what we must assume (for consistency of HITs see \cite{Shul}). We do not need the full notion of HITs for our purposes. The type we are postulating is a quotient inductive type (QIT) family, where everything is quotiented. This amounts to having a propositional truncation constructor. Of course, details about how to construct QITs in general are beyond the scope of this note. For now we will simply state the definitions of our proposed type and hope that its tameness is apparent. First we provide some shorthand for the two closure properties:

\begin{definition}
    \label{def:closed-notation}
    Given a generator $\phi : \mathcal{P}_\mathcal{U \sqcup V^+} (B \times L)$ and a subset $S : \mathcal{P}_\mathcal{T}(B)$ we say that $S$ is \textbf{closed under containment} if there is a function 
    $$\text{is-c-closed}(S) : \equiv \prod_{U : \mathcal{P}_{\mathcal{V}} (B)} U \subseteq S \to \dBV \bigvee U \subseteq S$$
    and $S$ is \textbf{closed under} $\phi$ if there is a function
    $$\text{is-}\phi\text{-closed} (S) : \equiv \prod_{a : L} \prod_{b : B} (b , a) \in \phi \to \dB a \subseteq S \to b \in S.$$
\end{definition}

What follows is our first use of Higher Inductive Types. 

\begin{definition}
    \label{def:QIT}
    Given a generator $\phi : \mathcal{P}_\mathcal{U \sqcup V^+} (B \times L)$ we define the QIT family $\mathcal{I}_\phi : B \to \mathcal{U} \sqcup \mathcal{V}^+$, which we call the \textbf{least closed subset under containment and} $\phi$, which has the following constructors:
    \begin{enumerate}
        \item $\mathcal{I}\text{-trunc} : \prod_{b : B} \text{is-prop}(\mathcal{I}_\phi(b))$,
        \item $\text{c-cl} : \text{is-c-closed} (\mathcal{I}_\phi)$,
        \item $\phi\text{-cl} : \text{is-}\phi\text{-closed} (\mathcal{I}_\phi)$.
    \end{enumerate}
    Notice that the universe level $\mathcal{U \sqcup V^+}$ of $\mathcal{I}_\phi$ results from quantification over $L : \mathcal{U}$ and $\mathcal{P}_\mathcal{V}(B) : \mathcal{V^+}$. We choose not to state the induction principle of $\mathcal{I}_\phi$ as it will not be needed beyond this section. For details see \cite{Ian}.
\end{definition}

\begin{remark}
    The $\mathcal{I}\text{-trunc}$ constructor guarantees that $\mathcal{I}_\phi : \mathcal{P}_\mathcal{U \sqcup V^+}(B)$. 
\end{remark}

The truncation constructor restricts the possible codomains to be propositionally valued families indexed by $B$ and possibly $\mathcal{I}_\phi (b)$. When defining functions out of $\mathcal{I}_\phi$ we may use the induction principle or pattern match on the point constructors c-cl and $\phi$-cl. We need to show one crucial property of $\mathcal{I}_\phi$ before moving on: $\mathcal{I}_\phi$ is initial among all subsets of $B$ closed under containment and $\phi$.  

\begin{proposition}
The subset $\mathcal{I}_\phi$ is initial with respect to small subsets, justifying its name. That is, we have a term
$$\mathcal{I}\text{-initial} : \prod_{P : \mathcal{P}_\mathcal{T}(B)} \text{is-c-closed}(P) \to \text{is-}\phi\text{-closed} (P) \to \mathcal{I}_\phi \subseteq P.$$
\end{proposition}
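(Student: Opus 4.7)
The plan is to invoke the recursion (induction) principle for $\mathcal{I}_\phi$ with the target propositional family $b \mapsto b \in P$. Because each $P(b)$ is a proposition by assumption, the $\mathcal{I}\text{-trunc}$ case is automatically discharged, and it remains to supply clauses for the two point constructors $\text{c-cl}$ and $\phi\text{-cl}$. The key observation is that the closure hypotheses we have placed on $P$ are precisely the data these two clauses demand.

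First, for the $\text{c-cl}$ constructor, suppose we are given $U : \mathcal{P}_\mathcal{V}(B)$ together with a witness $U \subseteq \mathcal{I}_\phi$. The induction hypothesis transports this along $P$ to yield $U \subseteq P$, and then we must conclude $\dBV \bigvee U \subseteq P$, which is exactly what $\text{is-c-closed}(P)$ applied to $U$ delivers. Second, for the $\phi\text{-cl}$ constructor, suppose $(b, a) \in \phi$ together with $\dB a \subseteq \mathcal{I}_\phi$. The induction hypothesis gives $\dB a \subseteq P$, and feeding $(b, a)$ and this inclusion into $\text{is-}\phi\text{-closed}(P)$ yields $b \in P$, as required.

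I do not anticipate any real obstacle; the definition of $\mathcal{I}_\phi$ was engineered so that its point constructors mirror the two closure conditions exactly, making the proposition essentially a direct application of the recursion principle. The only subtlety worth flagging is universe polymorphism: the subset $P$ lives in an arbitrary universe $\mathcal{T}$, possibly incomparable with $\mathcal{U} \sqcup \mathcal{V}^+$, so one tacitly relies on a fully universe-polymorphic induction principle when eliminating $\mathcal{I}_\phi$ into $\mathcal{P}_\mathcal{T}(B)$. This is standard for QITs and causes no trouble.
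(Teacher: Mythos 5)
Your proposal is correct and takes the same approach as the paper, which dispatches the proposition with the one-line justification ``By definition of $\mathcal{I}_\phi$.'' You have simply spelled out what that one-liner compresses: eliminate into the propositional family $b \mapsto b \in P$ and observe that the clauses demanded for the two point constructors are exactly the closure hypotheses on $P$.
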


\begin{proof}
    By definition of $\mathcal{I}_\phi$
\end{proof}

\section{Local Inductive Generators}
\label{sec:6}
In \cite{Curi} it is desirable to indicate when an abstract inductive definition yields a set, at least locally. We say an abstract inductive inductive definition $\Phi$ is local if for every $a \in L$ the class 
$$\{ b \in B \ | \ \exists a' \in L, (b , a') \in \Phi \land a' \leq a \}$$
is a set. When an abstract inductive definition $\Phi$ is local we may define a monotone operator as follows $\Gamma_\Phi(a) \equiv \bigvee \{ b \in B \ | \ \exists a' \in L, (b , a') \in \Phi \land a' \leq a \}$.

We now translate these notions into type theory. For the remainder of this note we work in the context of a $\mathcal{V}$-sup-lattice $L$ with a basis $\beta : B \to L$.

\begin{definition}
    \label{def:S}
    Given a generator $\phi : \mathcal{P}_\mathcal{U \sqcup V^+} (B \times L)$ and $a : L$ we define the following subset of $B$ 
    $$ b \mapsto \exists a' : L , (b , a') \in \phi \land a' \leq a. $$ 
    We denote the total space of this subset as $S_{\phi,a}$.
\end{definition}

As is convention we will conflate the total space $S_{\phi,a}$ and its corresponding subset.

\begin{proposition}
    \label{prop:S-mono}
    Given a generator $\phi : \mathcal{P}_\mathcal{U \sqcup V^+} (B \times L)$ and $x , y : L$ such that $x \leq y$ we have $S_{\phi,x} \subseteq S_{\phi,y}$.
\end{proposition}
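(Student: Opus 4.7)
The plan is to unfold the definitions and use transitivity of the order. Fix $b : B$ and assume $b \in S_{\phi,x}$. By \Cref{def:S}, this means we have the (truncated) existence of some $a' : L$ with $(b,a') \in \phi$ and $a' \leq x$. The goal $b \in S_{\phi,y}$ is again an existential statement, hence a proposition, so by the convention on propositional truncation made in \Cref{sec:3} we may extract such a witness $a'$ to produce the required term.

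Given such an $a'$, the plan is to supply $a'$ itself as the witness for $b \in S_{\phi,y}$. The membership $(b,a') \in \phi$ is immediate from the hypothesis, and $a' \leq y$ follows from $a' \leq x$ and $x \leq y$ by transitivity (Condition~3 of \Cref{def:poset}).

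No real obstacle arises here; the only thing to be careful about is that we are doing this for all $b : B$ and that the target type is propositional, which justifies the implicit use of truncation recursion. Concluding, the pointwise implication $b \in S_{\phi,x} \to b \in S_{\phi,y}$ holds for every $b : B$, which by definition of $\subseteq$ gives $S_{\phi,x} \subseteq S_{\phi,y}$.
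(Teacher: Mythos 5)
Your proof is correct and matches the paper's argument: fix $b$, extract the witness $a'$ from the truncated existential (justified since the goal is a proposition), reuse $a'$ as the witness, and close with transitivity. The paper is merely terser about the truncation-recursion step, which you spell out explicitly.
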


\begin{proof}
    It suffices to show that
    $$\exists a' : L , (b , a') \in \phi \land a' \leq x \to \exists a' : L , (b , a') \in \phi \land a' \leq y$$
    for each $b : B.$
    Assume we have $a' : L$ such that $(b,a') \in \phi$ and $a' \leq x$. By assumption, $x \leq y$ so by transitivity, $a' \leq y$.
\end{proof}

We can now state what it means for a generator to be local.

\begin{definition}
    \label{def:local}
    We say a generator $\phi : \mathcal{P}_\mathcal{U \sqcup V^+} (B \times L)$ is \textbf{local} if for any $a : L$ the type $S_{\phi , a}$ is $\mathcal{V}$-small.
\end{definition}

We now define, for any local generator, the following map. Here we are taking advantage of the existence of small joins.

\begin{definition}
    \label{def:Gamma}
    Given a local generator $\phi : \mathcal{P}_\mathcal{U \sqcup V^+} (B \times L)$ we define the \textbf{monotone operator} under~ $\phi$ to be the map $\Gamma_\phi : L \to L$ defined as 
    $$\Gamma_\phi (a) : \equiv \bigvee S_{\phi , a}$$
    where there is an obvious map $\beta \circ \text{inc}_B : S_{\phi , a} \to L$ (note the implicit application of \Cref{prop:surj-same-sup}).
\end{definition}

Not surprisingly this map is monotonic. It is worth noting that the formalized proof must account for the implicit use of \Cref{prop:surj-same-sup} in the Definition of $\Gamma_\phi$ which adds a layer of difficulty. This is one of the reasons why \Cref{prop:surj-same-sup} is so essential.

\begin{proposition}
    \label{prop:Gamma-mono}
    Given a local generator $\phi : \mathcal{P}_\mathcal{U \sqcup V^+} (B \times L)$ the map $\Gamma_\phi$ is monotone.
\end{proposition}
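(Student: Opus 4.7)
The plan is to deduce monotonicity of $\Gamma_\phi$ by combining Proposition \ref{prop:S-mono} (monotonicity of $S_{\phi,-}$ in the second argument) with Proposition \ref{prop:subset-preserve-sup} (joins preserve subset inclusion). That is, given $x \leq y$ in $L$, Proposition \ref{prop:S-mono} yields $S_{\phi,x} \subseteq S_{\phi,y}$, and Proposition \ref{prop:subset-preserve-sup} then gives $\bigvee S_{\phi,x} \leq \bigvee S_{\phi,y}$, which is exactly $\Gamma_\phi(x) \leq \Gamma_\phi(y)$. At the conceptual level the argument is therefore a one-liner.

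The actual obstacle, flagged in the paragraph before the statement, is the universe bookkeeping. The subset defining $S_{\phi,a}$ lies in $\mathcal{P}_{\mathcal{U} \sqcup \mathcal{V}^+}(B)$, not in $\mathcal{P}_\mathcal{V}(B)$, so Proposition \ref{prop:subset-preserve-sup} cannot be applied directly: that proposition takes $\mathcal{V}$-valued subsets of a type in $\mathcal{V}$. Locality of $\phi$ only gives us, for each $a$, a $\mathcal{V}$-small resizing $\tilde{S}_{\phi,a} : \mathcal{V}$ together with an equivalence $e_a : \tilde{S}_{\phi,a} \simeq S_{\phi,a}$, and $\Gamma_\phi(a)$ is literally the join of $\beta \circ \mathrm{inc}_B \circ e_a$. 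This resizing is implicit in Definition \ref{def:Gamma} via Proposition \ref{prop:surj-same-sup}, and we must unfold it now.

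Concretely I would proceed as follows. Fix $x \leq y$ and the chosen resizings $e_x$, $e_y$. To show $\Gamma_\phi(x) \leq \Gamma_\phi(y)$ it suffices, by the least-upper-bound property of $\bigvee(\beta \circ \mathrm{inc}_B \circ e_x)$, to show that $\Gamma_\phi(y)$ is an upper bound of $\beta \circ \mathrm{inc}_B \circ e_x$. Take $\tilde{s} : \tilde{S}_{\phi,x}$ and set $s := e_x(\tilde{s}) : S_{\phi,x}$. By Proposition \ref{prop:S-mono}, $s$ also belongs to $S_{\phi,y}$. Since $e_y$ is an equivalence (hence a surjection), Proposition \ref{prop:surj-same-sup} identifies $\bigvee(\beta \circ \mathrm{inc}_B \circ e_y)$ with the supremum of the family $\beta \circ \mathrm{inc}_B : S_{\phi,y} \to L$, and this supremum is by definition an upper bound of that family at $s$. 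Thus $\beta(\mathrm{inc}_B(e_x(\tilde{s}))) = \beta(\mathrm{inc}_B(s)) \leq \Gamma_\phi(y)$, as required.

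The hard part, then, is not the order-theoretic content but the discipline of invoking Proposition \ref{prop:surj-same-sup} at both ends so that the containment $S_{\phi,x} \subseteq S_{\phi,y}$ translates cleanly into an inequality between two suprema that are, strictly speaking, taken over different $\mathcal{V}$-small resizings. In a formalisation this is where the bulk of the work lies, but in informal prose the proof is essentially a composition of Propositions \ref{prop:subset-preserve-sup}, \ref{prop:surj-same-sup} and \ref{prop:S-mono}.
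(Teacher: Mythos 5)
Your argument is exactly the paper's: apply \Cref{prop:S-mono} to get $S_{\phi,x} \subseteq S_{\phi,y}$ and then \Cref{prop:subset-preserve-sup} to conclude $\bigvee S_{\phi,x} \leq \bigvee S_{\phi,y}$. The extra discussion of unfolding the implicit use of \Cref{prop:surj-same-sup} in \Cref{def:Gamma} is precisely the formalisation overhead that the paper flags in the paragraph preceding the proposition and then suppresses in the informal proof, so you have, if anything, written out the paper's proof in more detail.
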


\begin{proof}
    Given $x \leq y$ we must show $\bigvee S_{\phi,x} \leq \bigvee S_{\phi,y}$. By \Cref{prop:S-mono} we have $S_{\phi,x} \subseteq S_{\phi,y}$. Thus, by \Cref{prop:subset-preserve-sup}, the desired inequality holds.
\end{proof}

Perhaps more surprisingly, every monotone map provides a canonical local generator such that the induced map equals the orignal map.

\begin{proposition}
    \label{prop:phi-from-mono}
    Given a monotone endomap $f : L \to L$ there is a local generator $\phi : \mathcal{P}_\mathcal{U \sqcup V^+} (B \times L)$ with $\Gamma_\phi (x) = f(x)$, for all $x : L$.
\end{proposition}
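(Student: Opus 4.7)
The plan is to take the most natural generator: define
$$\phi(b,a) \;:\equiv\; \beta(b) \leq f(a),$$
using the $\mathcal{V}$-small equivalent $b \leq^B f(a)$ from condition 1 of \Cref{def:basis} (with a suitable lift) to place this predicate in the universe $\mathcal{U} \sqcup \mathcal{V}^+$ required by \Cref{def:BSG}. Intuitively, we encode into $\phi$ exactly those pairs $(b,a)$ for which $\beta(b)$ sits below $f(a)$; the induced operator should then recover $f$ because the basis faithfully represents every element of $L$ via its down-set.

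First I would verify that for each $x : L$ the subset $S_{\phi,x} \subseteq B$ coincides (as a subset) with $\dB f(x)$. That is, for every $b : B$ we need
$$\bigl(\exists a' : L,\; \beta(b) \leq f(a') \,\wedge\, a' \leq x\bigr) \;\longleftrightarrow\; \beta(b) \leq f(x).$$
The backward direction is immediate by choosing $a' :\equiv x$. The forward direction is the only place monotonicity of $f$ is used: from $a' \leq x$ we obtain $f(a') \leq f(x)$, whence $\beta(b) \leq f(a') \leq f(x)$ by transitivity. By propositional extensionality these two subsets of $B$ are equal, so the total space $S_{\phi,x}$ is equivalent to $\dB f(x)$, and in particular to $\dBV f(x) : \mathcal{V}$. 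This establishes locality of $\phi$ in the sense of \Cref{def:local}.

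Finally, for the equality $\Gamma_\phi(x) = f(x)$, observe that the map $\beta \circ \text{inc}_B : S_{\phi,x} \to L$ and the map $\beta \circ \text{pr}_1 : \dB f(x) \to L$ agree under the equivalence above. Applying \Cref{cor:equiv-same-sup} along this equivalence yields
$$\Gamma_\phi(x) \;\equiv\; \bigvee S_{\phi,x} \;=\; \bigvee \dB f(x) \;=\; f(x),$$
where the last equality is condition 2 of \Cref{def:basis} applied to $f(x)$.

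There is no real conceptual obstacle here; the only delicate point is bookkeeping the universes so that $\phi$ lands in $\mathcal{P}_{\mathcal{U} \sqcup \mathcal{V}^+}(B \times L)$ while the proof that $S_{\phi,x}$ is $\mathcal{V}$-small goes through the $\mathcal{V}$-small version $b \leq^B f(a)$ of the order, and being careful that the implicit family on $S_{\phi,x}$ appealed to in \Cref{def:Gamma} matches up with $\beta \circ \text{pr}_1$ on $\dB f(x)$ when invoking \Cref{cor:equiv-same-sup}.
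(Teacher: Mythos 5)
Your proposal is correct and follows essentially the same route as the paper's proof: take $\phi(b,a)$ to be (the small version of) $\beta(b) \leq f(a)$, establish $S_{\phi,x} \simeq \dBV f(x)$ pointwise using reflexivity in one direction and monotonicity plus transitivity in the other, and then transport the supremum along the equivalence. The only cosmetic difference is that you invoke \Cref{cor:equiv-same-sup} where the paper cites \Cref{prop:surj-same-sup} directly, and the paper defines $\phi$ directly via $b \leq^B f(a)$ rather than $\beta(b) \leq f(a)$; both are immaterial.
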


\begin{proof}
    First we define $\phi : \mathcal{P}_\mathcal{U \sqcup V^+} (B \times L)$ via $\phi(b , a) : \equiv b \leq^B f(a)$ (technically we should lift to the universe $\mathcal{U \sqcup V^+}$). To show that $\phi$ is local it suffices to show the equivalence $\dBV f(x) \simeq S_{\phi , x}$ for any $x : L$. With the above equivalence established, we also may conclude
    $$\Gamma_\phi(x) \equiv \bigvee S_{\phi , x} = \bigvee \dBV f(x) = f(x)$$
    where the first equality is \Cref{def:Gamma}, the second follows from \Cref{prop:surj-same-sup} and the last from the small basis assumption (see \Cref{def:basis}). Now we need to actually provide the equivalence. For this it suffices to show for any $b : B$
    $$b \leq^B f(x) \longleftrightarrow \exists a' : L , b \leq^B f(a') \land a' \leq x$$
    which expresses that the two subsets have the same elements.
    For the forward direction we notice that $x$ does the trick as reflexivity gives $x \leq x$. For the other direction, suppose we have $a' : L$ with $b \leq^B f(a')$ and $a' \leq x$. By monotonicity, we have $f(a') \leq f(x)$. Then we can apply transitivity to conclude $b \leq^B f(x)$ as desired.
\end{proof}

\begin{remark}
    Notice that in the last step of the proof we actually first translate $b \leq^B f(a')$ to $\beta(b) \leq f(a')$ via the equivalence established in \Cref{def:basis}. Then apply transitivity and translate back along the equivalence. These details are important in formalization, but tedious to notate in proof sketches. 
\end{remark}

\begin{example}
    \label{ex:local-BSG's}
    A wealth of examples of local generators follow from Proposition 5.6. For example, consider the identity maps on either $\mathcal{V}$-generated sup lattice $\Omega_\mathcal{V}$ or $\mathcal{P}_\mathcal{V}(A)$ where $A : \mathcal{V}$ is a set.
\end{example}

\section{Least Fixed Point of Monotone Operators}
\label{sec:7}
In \cite{Curi} it is shown that there is a correspondence between $\text{c}_L$ and $\phi$-closed subsets and elements $a \in L$ such that $\Gamma_\Phi (a) \leq a$. Further it is shown that if $\mathcal{I}(\Phi)$ is a set then $\Gamma_\Phi$ has a least fixed point. We now translate these results into type theory.

Assuming a generator $\phi$ is local we can show that the $\mathcal{V}$-small subsets that are both c-closed and $\phi$-closed correspond to deflationary points with respect to $\Gamma_\phi$. For the remainder of this section we work in a context with a local generator $\phi$.

\begin{definition}
    \label{def:closure}
    Given a small subset $P : \mathcal{P}_\mathcal{V} (B)$ we say $P$ is \textbf{closed under containment and} $\phi$ if it satisfies analgous conditions to that of the least closed subset. Explicitly we write
    \begin{align*}
        \text{is-c-}\phi\text{-closed}_\mathcal{V} (P) &:\equiv \text{is-c-closed} (P) \times \text{is-}\phi\text{-closed} (P). 
    \end{align*}
\end{definition}

\begin{definition}
    \label{def:deflationary}
    A point $a : L$ is \textbf{deflationary} with respect to $\Gamma_\phi$ if $\Gamma_\phi (a) \leq a$. We will write $\text{is-deflationary}(a) : \equiv \Gamma_\phi (a) \leq a$
\end{definition}

One can observe that $\text{is-c-}\phi\text{-closed} (P)$ and $\text{is-deflationary} (a)$ are propositions as they are made up of propositions. We now prove the proposed correspondence, which reveals itself in type theory as an equivalence.

\begin{proposition}
    \label{prop:correspondance}
    The type of small c-$\phi$-closed subsets is equivalent to the type of deflationary points. That is,
    $$\sum_{P : \mathcal{P}_\mathcal{V}(B)} \text{is-c-$\phi$-closed} (P) \simeq \sum_{a : L} \text{is-deflationary} (a).$$
\end{proposition}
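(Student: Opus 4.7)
The plan is to exhibit explicit inverse maps on the first components. Since both fibers $\text{is-c-}\phi\text{-closed}_\mathcal{V}(P)$ and $\text{is-deflationary}(a)$ are propositions, it suffices to build a bijection between the underlying carriers together with showing that each construction lies in the correct subtype. The forward map will send a closed $P$ to $a :\equiv \bigvee P$, viewed as the supremum of the $\mathcal{V}$-small family $\beta \circ \text{inc}_B : \mathbb{T}(P) \to L$; the backward map will send a deflationary $a$ to $P :\equiv \dBV a$, which is $\mathcal{V}$-valued by basis condition~1 of \Cref{def:basis}.

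For the forward map to be well-typed I need $\bigvee P$ to be deflationary. I would reduce this to showing $S_{\phi, \bigvee P} \subseteq P$, since then \Cref{prop:subset-preserve-sup} gives $\Gamma_\phi(\bigvee P) = \bigvee S_{\phi, \bigvee P} \leq \bigvee P$. So take $b \in S_{\phi, \bigvee P}$ with witness $a'$ satisfying $(b, a') \in \phi$ and $a' \leq \bigvee P$; to apply $\phi$-closure of $P$ and conclude $b \in P$, it suffices to show $\dB a' \subseteq P$. For any $b' \in \dB a'$ I have $\beta(b') \leq a' \leq \bigvee P$, hence $b' \in \dBV \bigvee P$, and c-closure of $P$ applied to the trivial inclusion $P \subseteq P$ yields $\dBV \bigvee P \subseteq P$, so $b' \in P$. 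This step is the heart of the proof: both closure conditions get used in concert.

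For the backward map I need $\dBV a$ to be c-closed and $\phi$-closed. Given $U \subseteq \dBV a$, \Cref{prop:subset-preserve-sup} gives $\bigvee U \leq a$, so any $b$ with $\beta(b) \leq \bigvee U$ satisfies $\beta(b) \leq a$ by transitivity, establishing c-closure. For $\phi$-closure, suppose $(b, a') \in \phi$ and $\dB a' \subseteq \dBV a$. Then basis condition~2 gives $a' = \bigvee \dB a' \leq \bigvee \dBV a = a$ via another application of \Cref{prop:subset-preserve-sup}, so $a'$ witnesses $b \in S_{\phi, a}$; hence $\beta(b) \leq \bigvee S_{\phi, a} = \Gamma_\phi(a) \leq a$ by deflationarity, i.e.\ $b \in \dBV a$.

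The two round-trips are then immediate: starting from $(a, d)$, basis condition~2 gives $\bigvee \dBV a = a$; starting from $(P, c)$, one has $\dBV \bigvee P \subseteq P$ by c-closure applied to $P \subseteq P$, and $P \subseteq \dBV \bigvee P$ because every $b \in P$ contributes $(b, \_) \in \mathbb{T}(P)$ and so $\beta(b) \leq \bigvee P$; propositional extensionality then identifies the two subsets. The main obstacles I anticipate are bureaucratic rather than conceptual: carefully threading through the $\dB \simeq \dBV$ equivalence, since the two closure conditions are phrased with different variants, and tracking the implicit uses of \Cref{prop:surj-same-sup} buried inside the definition of $\Gamma_\phi$ so that $\bigvee S_{\phi, a}$ may be manipulated as a literal supremum of the family $\beta \circ \text{inc}_B$.
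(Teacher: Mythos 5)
Your proposal is correct and follows essentially the same route as the paper: the same explicit maps $P \mapsto \bigvee P$ and $a \mapsto \dBV a$, the same interplay of c-closure (with $P \subseteq P$) and $\phi$-closure in the forward direction, the same use of \Cref{prop:subset-preserve-sup} and deflationarity in the backward direction, and the same observation that checking inverseness reduces to the first components because the fibers are propositions. The only cosmetic difference is that you phrase the forward direction as $S_{\phi,\bigvee P}\subseteq P$ followed by \Cref{prop:subset-preserve-sup}, where the paper directly verifies that $\bigvee P$ is an upper bound of $S_{\phi,\bigvee P}$; these are the same argument.
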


\begin{proof}
    We commence by defining maps in either direction. First suppose we have $P : \mathcal{P}_\mathcal{V} (B)$ that is c-$\phi$-closed. We will now show that $\bigvee P$ is deflationary: $\Gamma_\phi (\bigvee P) \leq \bigvee P$. For this, it suffices to show that $\bigvee P$ is an upper bound of $S_{\phi , \bigvee P}$. So, consider $b : B$ such that $\exists a' : L, (b , a') \in \phi \land a' \leq \bigvee P$ with the intent to show $q(b) \leq \bigvee P$. By definition of $\bigvee P$, this can be reduced to showing that $b \in P$. So, assume we have $a' : L$ with $(b , a') \in \phi$ and $a' \leq \bigvee P$. Now since $P$ is assumed to be c-$\phi$-closed we have 
    $$\text{is-c-closed}(P) \equiv \prod_{U : \mathcal{P}_{\mathcal{V}} (B)} U \subseteq P \to \dBV \bigvee U \subseteq P.$$ 
    If we use $P$ for $U$ and the fact that $a' \leq \bigvee P$ we conclude from c-closure that
    $$\dB a' \subseteq P.$$
    We also have
    $$\text{is-}\phi\text{-closed}(P) \equiv \prod_{a : L} \prod_{b : B} (b , a) \in \phi \to \dB a \subseteq P \to b \in P.$$ 
    We can now satisfy each hypothesis of the above, so we may conclude $b \in P$, as desired. 
    
    Now assume we have a deflationary point $a : L$. We now show that $\dBV a$ is c-$\phi$-closed. For c-closure we need a function $\Pi_{U : \mathcal{P}_\mathcal{V} (B)} U \subseteq \dBV a \to \dBV \bigvee U \subseteq \dBV a$. In the interest of defining such a function consider $U : \mathcal{P}_\mathcal{V}(B)$, with $U \subseteq \dBV a$ and $b : B$ with $b \in \dBV \bigvee U$. By assumption and \Cref{prop:subset-preserve-sup} 
    $$b \leq^B \bigvee U \leq \bigvee \dBV a = a$$
    so that $b \in \dBV a$, as desired. For $\phi$-closure we need a function of type $\Pi_{a' : L} \Pi_{b : B} (b , a') \in \phi \to \dBV a' \subseteq~ \dBV a \to b \leq^B a$. By \Cref{prop:subset-preserve-sup} we have $$a' = \bigvee \dBV a' \leq \bigvee \dBV a = a.$$ 
    From this we may conclude $\exists a', (b , a') \in \phi \land a' \leq a$, or $b \in S_{\phi , a}$. Now together with the assumption that $a$ is deflationary we conclude that
    $$b \leq^B \bigvee S_{\phi,a} \equiv \Gamma_\phi(a) \leq a$$
    as desired.
    
    It remains to show that these maps are inverse. We explicate the above maps $(P , \_) \mapsto (\bigvee P , \_)$ and $(a , \_) \mapsto (\dBV a , \_)$ where the second components are unspecified because they are propositionally valued. For this reason we need only check that the first projections are equal when the compositions are applied. That is, we must show 
    $$\bigvee \dBV a = a$$
    and 
    $$\dBV \bigvee P = P.$$
    Now, the former holds by \Cref{def:basis}. For the latter, it suffices to show $x \leq^B \bigvee P \longleftrightarrow x \in P$. The forward direction follows from c-closure. The reverse direction is immediate since $\bigvee P$ is an upper bound of all $x : L$ with $x \in P$. This concludes the proof.
\end{proof}

We can now show, under certain smallness assumption on the QIT family $\mathcal{I}_\phi$, that $\Gamma_\phi$ has a least fixed point. First we collect some important constructions that can be built under the assumptions of the following theorem.

If for all $b : B$ the type $b \in \mathcal{I}_\phi$ is $\mathcal{V}$-small, then we can construct the following:
\begin{enumerate}
    \item For each $b : B$, a type $\mathcal{I}_\phi^{\mathcal{V}} (b) : \mathcal{V}$ that is a proposition and an equivalence $\mathcal{I}_\phi^{\mathcal{V}} (b) \simeq b \in \mathcal{I}_\phi$.
    \item A subset $\mathcal{I}_\phi^{\mathcal{V}} : \mathcal{P}_\mathcal{V} (B)$ that witnesses to the fact that $\mathbb{T} (\mathcal{I}_\phi)$ is $\mathcal{V}$-small; that is, $\mathbb{T}(\mathcal{I}_\phi^{\mathcal{V}}) \simeq \mathbb{T} (\mathcal{I}_\phi)$.
    \item Intiality of $\mathcal{I}_\phi^{\mathcal{V}}$, which follows from that of $\mathcal{I}_\phi$ via the equivalence.
\end{enumerate}

\begin{proposition}
    \label{prop:small-QIT-give-fixed-point}
    Suppose that for all $b : B$ the type $b \in \mathcal{I}_\phi$ is $\mathcal{V}$-small. Then $\Gamma_\phi$ has a least fixed point. That is, there is a $p : L$ such that $\Gamma_\phi (p) = p$ and if $x : L$ with $\Gamma_\phi (x) = x$ then $p \leq x$.
\end{proposition}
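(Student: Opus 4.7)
The natural candidate for the least fixed point is $p :\equiv \bigvee \mathcal{I}_\phi^{\mathcal{V}}$, where $\mathcal{I}_\phi^{\mathcal{V}} : \mathcal{P}_\mathcal{V}(B)$ is the small version of $\mathcal{I}_\phi$ furnished by the smallness assumption. The plan is to argue that this $p$ is deflationary, then inflationary, and finally below every fixed point. Each of these three steps will pass through the correspondence of \Cref{prop:correspondance} and the initiality of $\mathcal{I}_\phi^{\mathcal{V}}$.

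First I would observe that $\mathcal{I}_\phi^{\mathcal{V}}$ is itself c-$\phi$-closed: the two closure constructors of $\mathcal{I}_\phi$ transport along the equivalence $\mathcal{I}_\phi^{\mathcal{V}}(b) \simeq b \in \mathcal{I}_\phi$. Applying the forward direction of \Cref{prop:correspondance} to $\mathcal{I}_\phi^{\mathcal{V}}$ immediately yields that $p = \bigvee \mathcal{I}_\phi^{\mathcal{V}}$ is deflationary, i.e., $\Gamma_\phi(p) \leq p$.

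The main obstacle, and the key step, is the reverse inequality $p \leq \Gamma_\phi(p)$. Here I would exploit monotonicity of $\Gamma_\phi$ (\Cref{prop:Gamma-mono}) to promote the deflationary inequality: from $\Gamma_\phi(p) \leq p$ we get $\Gamma_\phi(\Gamma_\phi(p)) \leq \Gamma_\phi(p)$, so $\Gamma_\phi(p)$ is itself deflationary. Feeding $\Gamma_\phi(p)$ into the reverse direction of \Cref{prop:correspondance} produces a c-$\phi$-closed small subset $\dBV \Gamma_\phi(p)$. By initiality of $\mathcal{I}_\phi^{\mathcal{V}}$ we obtain $\mathcal{I}_\phi^{\mathcal{V}} \subseteq \dBV \Gamma_\phi(p)$, and then \Cref{prop:subset-preserve-sup} together with \Cref{def:basis} gives
\[
p = \bigvee \mathcal{I}_\phi^{\mathcal{V}} \;\leq\; \bigvee \dBV \Gamma_\phi(p) \;=\; \Gamma_\phi(p).
\]
Combined with deflationarity and anti-symmetry, this shows $\Gamma_\phi(p) = p$.

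Finally, to establish leastness, let $x : L$ satisfy $\Gamma_\phi(x) = x$. In particular $x$ is deflationary, so \Cref{prop:correspondance} (reverse direction) tells us that $\dBV x$ is a c-$\phi$-closed small subset. Applying initiality of $\mathcal{I}_\phi^{\mathcal{V}}$ yields $\mathcal{I}_\phi^{\mathcal{V}} \subseteq \dBV x$, and then \Cref{prop:subset-preserve-sup} together with condition 2 of \Cref{def:basis} gives $p = \bigvee \mathcal{I}_\phi^{\mathcal{V}} \leq \bigvee \dBV x = x$, as required. The whole argument is a fixed-point-theoretic twist on Knaster--Tarski, with the role of the intersection of deflationary points played by the inductively generated small subset $\mathcal{I}_\phi^{\mathcal{V}}$.
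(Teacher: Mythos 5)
Your proposal is correct and follows essentially the same route as the paper: define $p :\equiv \bigvee \mathcal{I}_\phi^{\mathcal{V}}$, obtain deflationarity from the forward direction of \Cref{prop:correspondance}, use monotonicity to show $\Gamma_\phi(p)$ is deflationary, pass through the reverse direction of \Cref{prop:correspondance} and initiality to get $p \leq \Gamma_\phi(p)$, and finish leastness by the same initiality argument applied to $\dBV x$. The only cosmetic difference is that you cite \Cref{def:basis} directly for the equalities $\bigvee \dBV a = a$, whereas the paper cites the inverse-map step inside \Cref{prop:correspondance}, which itself reduces to \Cref{def:basis}.
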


\begin{proof}
    We now show that $p :\equiv \bigvee \mathcal{I}_\phi^{\mathcal{V}}$ is the least fixed point. To show it is a fixed point it suffices to show that $\Gamma_\phi (p) \leq p$ and $p \leq \Gamma_\phi (p)$. For the former we observe that since $\mathcal{I}_\phi^{\mathcal{V}}$ is c-$\phi$-closed we have that $p$ is deflationary, by \Cref{prop:correspondance}. For the latter, first observe that monotonicity, in  tandem with what we have just shown, establishes $\Gamma_\phi (\Gamma_\phi (p)) \leq 
    \Gamma_\phi (p)$. Thus, $\Gamma_\phi (p)$ is deflationary and by \Cref{prop:correspondance} yields a c-$\phi$-closed subset $\dBV \Gamma_\phi (p)$. By the inherited initiality, $\mathcal{I}_\phi^{\mathcal{V}} \subseteq \dBV \Gamma_\phi (p)$. Finally, by \Cref{prop:subset-preserve-sup} 
    $$p \equiv \bigvee \mathcal{I}_\phi^{\mathcal{V}} \leq \bigvee \dBV \Gamma_\phi (p) = \Gamma_\phi (p),$$
    where the final equality follows from \Cref{prop:correspondance}, where we showed the maps $\bigvee \_$ and $\dBV \_$ are inverses. Now to complete the proof, let $x : L$ be any other fixed point. In particular, $x$ is deflationary and as such yields a c-$\phi$-closed subset $\dBV x$. As $\mathcal{I}_\phi^{\mathcal{V}}$ is initial we have $\mathcal{I}_\phi^{\mathcal{V}} \subseteq \dBV x$ and thus by \Cref{prop:subset-preserve-sup} 
    $$p \equiv \bigvee \mathcal{I}_\phi^{\mathcal{V}} \leq \bigvee \dBV x = x$$
    where once again the last equality follows from \Cref{prop:correspondance}.
\end{proof}

\begin{remark}
    It is worth mentioning that the smallness assumption we made in \Cref{prop:small-QIT-give-fixed-point} is to ensure that $\bigvee \mathbb{T} (\mathcal{I}_\phi^{\mathcal{V}})$ exists. In fact, we may now record the following corollary which holds under the assumption of propositional resizing.
\end{remark}

In light of the previous result we gain an impredicative version of the least fixed point theorem which follows from propositional resizing.

\begin{corollary}
    \label{cor:Impredicative-Least-Fixed-Point}
    If $\text{Prop-Resizing}_\mathcal{U \sqcup V^+,V}$ holds then every monotone endomap $f : L \to L$ has a least fixed point.
\end{corollary}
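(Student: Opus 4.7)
The plan is to combine the two main results of the preceding section with the resizing hypothesis. Given a monotone endomap $f : L \to L$, I would first invoke \Cref{prop:phi-from-mono} to obtain a local generator $\phi : \mathcal{P}_{\mathcal{U} \sqcup \mathcal{V}^+}(B \times L)$ such that $\Gamma_\phi (x) = f(x)$ for all $x : L$. Consequently, any least fixed point of $\Gamma_\phi$ is automatically a least fixed point of $f$, so it suffices to produce a least fixed point for the induced monotone operator $\Gamma_\phi$.

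Next I would apply \Cref{prop:small-QIT-give-fixed-point}, whose hypothesis is that $b \in \mathcal{I}_\phi$ is $\mathcal{V}$-small for every $b : B$. By \Cref{def:QIT} (specifically the $\mathcal{I}\text{-trunc}$ constructor together with the universe bookkeeping remarked upon there), for each $b : B$ the type $b \in \mathcal{I}_\phi$ is a proposition living in $\mathcal{U} \sqcup \mathcal{V}^+$. The hypothesis $\text{Prop-Resizing}_{\mathcal{U} \sqcup \mathcal{V}^+, \mathcal{V}}$ asserts precisely that every such proposition is equivalent to one in $\mathcal{V}$; that is, it is $\mathcal{V}$-small. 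Thus the smallness hypothesis of \Cref{prop:small-QIT-give-fixed-point} is met for our $\phi$, and we obtain a point $p : L$ which is a least fixed point of $\Gamma_\phi$, hence of $f$.

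I do not anticipate any serious obstacle: the proof is essentially a composition of two already-established propositions, with propositional resizing used as the single bridge that provides the missing smallness condition. The only mild subtlety to be careful about is making sure the universe levels line up, namely that the index $\mathcal{U} \sqcup \mathcal{V}^+$ matches the universe in which $\mathcal{I}_\phi(b)$ lives (which, as noted in \Cref{def:QIT}, is exactly $\mathcal{U} \sqcup \mathcal{V}^+$ due to the quantification over $L : \mathcal{U}$ and $\mathcal{P}_\mathcal{V}(B) : \mathcal{V}^+$ in the closure conditions). Once this is observed, the corollary follows immediately.
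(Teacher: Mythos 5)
Your proof is correct and follows the same route as the paper: invoke \Cref{prop:phi-from-mono} to produce a local generator $\phi$ with $\Gamma_\phi = f$, use $\text{Prop-Resizing}_{\mathcal{U \sqcup V^+, V}}$ to resize the proposition $b \in \mathcal{I}_\phi$ into $\mathcal{V}$, and then apply \Cref{prop:small-QIT-give-fixed-point}. The extra remarks on universe bookkeeping are accurate and simply make explicit what the paper leaves implicit.
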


\begin{proof}
    By \Cref{prop:phi-from-mono} there is a generator $\phi$ and monotone operator $\Gamma_\phi$ that corresponds to $f$. By $\text{Prop-Resizing}_\mathcal{U \sqcup V^+,V}$ we can show that $b \in \mathcal{I}_\phi$ is $\mathcal{V}$-small for every $b : B$. Thus, \Cref{prop:small-QIT-give-fixed-point} gives a least fixed point of $\Gamma_\phi$ and, by extension, $f$.
\end{proof}

\section{Bounded Inductive Generators}
\label{sec:8}
In \cite{Curi} a further restriction on abstract inductive definitions is imposed. Given an abstract inductive definition $\Phi$ we say it is bounded if
\begin{enumerate}
    \item $\{b \in B \ | \ (b , a) \in \Phi \}$ is a set for every $a \in L$
    \item There is a set $\alpha$ such that, whenever $(b , a) \in \Phi$ there is $x \in \alpha$ such that the set $\downarrow^B a$ is the image of $x$.
\end{enumerate}
Notice if $\Phi$ is a set, rather than a class, then $\Phi$ is automatically bounded.

We will now explore the translation of these notions into type theory. For the remainder of this section we work in the context of a $\mathcal{V}$-generated sup lattice $L$ with basis $\beta : B \to L$.
 
\begin{definition}
    \label{def:small-covering}
    A type $X : \mathcal{V}$ is a \textbf{small covering} of $Y: \mathcal{T}$ if there is a surjection $s : X \twoheadrightarrow Y$.
\end{definition}

\begin{definition}
    \label{def:has-bound}
    We say that a generator $\phi$ has a \textbf{bound} if there is $I : \mathcal{V}$ and $\alpha : I \to \mathcal{V}$ such that for any $a : L$ and $b : B$ with $(b , a) \in \phi$ there merely exists $i : I$ such that $\alpha(i)$ is a small covering of $\downarrow^B a$, that is
    $$\exists i : I , \alpha (i) \twoheadrightarrow \dB a.$$
\end{definition}

\begin{definition}
    \label{def:bounded}
    We say that a generator $\phi$ is \textbf{bounded} if for any $a : L$ and $b : B$ the type $(b , a) \in \phi$ is $\mathcal{V}$-small and $\phi$ has a bound.
\end{definition}

\begin{definition}
    \label{def:small-phi}
    We say that a generator $\phi$ is \textbf{small} if for any $a : L$ and $b : B$ the type $(b , a) \in \phi$ is $\mathcal{V}$-small and $\mathbb{T} (\phi) \equiv \sum_{(b,a) : B \times L} (b,a) \in \phi$ is $\mathcal{V}$-small.
\end{definition}

\begin{remark}
    If $\phi$ is small then it is bounded. We have as a bound $ \alpha' : \mathbb{T} (\phi) \to \mathcal{V}$ defined via
    $$(b , a , \_) \mapsto \dBV a.$$
    Technically, we should give $\alpha : T \to \mathcal{V}$ where $T : \mathcal{V}$ and $T \simeq \mathbb{T}(\phi)$.
\end{remark}

We now show that if a generator is bounded then it is local (see \Cref{def:local}).

\begin{proposition}
    \label{prop:bounded-implies-local}
    Every bounded generator is local.
\end{proposition}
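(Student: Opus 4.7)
The plan is to exhibit, for each $a : L$, a $\mathcal{V}$-small type equivalent to $S_{\phi, a}$; since smallness is invariant under equivalence, this yields locality. Fix a bound $\alpha : I \to \mathcal{V}$ with $I : \mathcal{V}$ witnessing that $\phi$ has a bound, and for $a : L$ define
\[
S'_{\phi, a} \;:\equiv\; \sum_{b : B}\, \Big\| \sum_{i : I}\, \sum_{f : \alpha(i) \to \dBV a}\, \bigl(b,\, \bigvee (\beta \circ \text{pr}_1 \circ f)\bigr) \in \phi \Big\|.
\]
I would first check that $S'_{\phi, a}$ is $\mathcal{V}$-small: $B : \mathcal{V}$ and $I : \mathcal{V}$ by hypothesis, each $\alpha(i) : \mathcal{V}$, and $\dBV a : \mathcal{V}$ by the basis condition, so the function type $\alpha(i) \to \dBV a$ is $\mathcal{V}$-small; the membership $(b,\_) \in \phi$ is $\mathcal{V}$-small by the first clause of \Cref{def:bounded}; and $\Sigma$-types together with propositional truncation preserve $\mathcal{V}$-smallness. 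In passing, this also ensures that the join $\bigvee (\beta \circ \text{pr}_1 \circ f)$ actually exists in our $\mathcal{V}$-sup-lattice.

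Next, I would construct an equivalence $S'_{\phi, a} \simeq S_{\phi, a}$ fibrewise over $b : B$. As both fibres are propositions, it suffices to produce maps in each direction. For the forward map, suppose $a'$ satisfies $(b, a') \in \phi$ and $a' \leq a$; boundedness supplies $i : I$ and a surjection $s : \alpha(i) \twoheadrightarrow \dB a'$ (we may untruncate since the goal is propositional). Because $a' \leq a$, every element of $\dB a'$ lies in $\dBV a$ by transitivity and the equivalence $\dB \simeq \dBV$, so composing $s$ with this inclusion gives $f : \alpha(i) \to \dBV a$. Then \Cref{prop:surj-same-sup} together with condition 2 of \Cref{def:basis} identifies $\bigvee(\beta \circ \text{pr}_1 \circ f)$ with $a'$, and we transport the assumption $(b, a') \in \phi$ along this equality. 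For the backward map, given $i$ and $f : \alpha(i) \to \dBV a$ with $(b, a'') \in \phi$ for $a'' :\equiv \bigvee(\beta \circ \text{pr}_1 \circ f)$, each summand $\beta(\text{pr}_1(f(x)))$ is bounded above by $a$, hence $a'' \leq a$, and the pair $(a'', \_)$ witnesses $b \in S_{\phi, a}$.

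The main obstacle is the bookkeeping between $\dB$ and $\dBV$ combined with the implicit appeal to \Cref{prop:surj-same-sup}: this is what lets us identify the join over the small covering $\alpha(i)$ with the original $a'$ and hence transport the $\phi$-membership coherently across the universe gap. Once those coherences are in place, the rest of the argument is the routine observation that $\Sigma$-types, function types between $\mathcal{V}$-small types, and propositional truncation all preserve $\mathcal{V}$-smallness, which is exactly what makes $S'_{\phi, a}$ inhabit $\mathcal{V}$.
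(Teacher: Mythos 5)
Your proposal is correct and follows essentially the same route as the paper: you exhibit the same $\mathcal{V}$-small type (the paper abbreviates the inner join $\bigvee(\beta \circ \text{pr}_1 \circ f)$ as $\bigvee \alpha(i)$, but it is the same family), prove a fibrewise logical equivalence with $S_{\phi,a}$ using boundedness in one direction and the upper-bound observation in the other, and invoke \Cref{prop:surj-same-sup} together with condition 2 of \Cref{def:basis} to identify the join over the small covering with $a'$. The only cosmetic difference is that you target $\dBV a$ rather than $\dB a$, which is harmless given their established equivalence.
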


\begin{proof}
    Let $\phi$ be a bounded generator with bound $\alpha$. To show $\phi$ is local it suffices to show 
    $$\left(\sum_{b : B} \exists i : I , \exists m : \alpha(i) \to \dB a , (b , \bigvee \alpha(i)) \in \phi\right) \simeq S_{\phi , a}$$
    for any $a : L$; since the type on the left hand side is itself $\mathcal{V}$-small under the assumptions. For this, it suffices to show that 
    $$\exists i : I , \exists m : \alpha(i) \to \downarrow^B a , (b , \bigvee \alpha(i)) \in \phi \longleftrightarrow \exists a' : L , (b , a') \in \phi \land a' \leq a$$
    for any $b : B$.
    
    For the forward direction, assume we have $i : I$ and $m : \alpha(i) \to \downarrow^B a$ such that $(b , \bigvee \alpha(i)) \in \phi$. Notice that for any $z : \alpha(i)$ we have that $\beta(\text{pr}_1(m(z))) \leq a$. This shows that $a$ is an upper bound of $\alpha(i)$ and as such $\bigvee \alpha(i) \leq a$, as desired. So we set $a' :\equiv \bigvee \alpha(i)$.
    
    For the reverse direction, assume $a' : L$ such that $(b , a') \in \phi$ and $a' \leq a$. First observe that $a' \leq a$ yields an inclusion $\iota : \dB a' \to \dB a$. Since $\phi$ is bounded, $\exists i : I, \alpha(i) \twoheadrightarrow \dB a'$. Let $f : \alpha(i) \to \dB a'$ be the function underlying the surjection and put $m : \equiv \iota \circ f : \alpha(i) \to \dB a$. Since $\iota$ is merely an inclusion we determine $a' = \bigvee \alpha(i)$ by \Cref{prop:surj-same-sup}. Finally, since $(b , a') \in \phi$, we conclude $(b, \bigvee \alpha(i)) \in \phi$.
\end{proof}

Recall, \Cref{prop:phi-from-mono} states that every monotone map determines canonical a local generator: $(b , a) \in \phi : \equiv b \leq^B f(a)$. Unfortunately, this generator is not particularly well behaved. To illustrate this we give the following example. 

\begin{example}
    \label{ex:cannon-gen-poor}
    Consider the map $f : L \to L$ defined to be constant at some $c : L$. The canonical local generator determined by $f$ is given by $(b , a) \in \phi :\equiv b \leq^B c$. To show this generator is bounded we need to give a small family $\alpha : I \to \mathcal{V}$ such that for any $b : B$ and $a : L$, if $b \leq^B c$ then $\exists i : I, \alpha(i) \twoheadrightarrow \dB a$. Assuming the hypothesis is satisfied by any $b : B$, it is satisfied by every $a : L$. It would very difficult (likely impossible) to provide a small family that covers $\dB a$ for every $a$. 

    Alternatively, we could define a generator $\phi$ which determines the function $f$. First, let's enforce the reasonable assumption that $L$ is locally $\mathcal{V}$-small. Let $(b , a) \in \phi : \equiv b \leq^B c \land a = 0_L$, where $0_L :\equiv \bigvee \emptyset$. Notice,
    \begin{align*}
        S_{\phi,a} &\equiv \sum_{b : B} \exists a' : L, b \leq^B c \land a' = 0_L \land a' \leq a \\
        &\simeq \sum_{b : B} b \leq^B c \land 0_L \leq a \\
        &\simeq \sum_{b : B} b \leq^B c \\
        &\equiv \downarrow^B_\mathcal{V} c
    \end{align*}
    so $\Gamma_\phi(a) = c$. Moreover, this generator is bounded since $b \leq^B c \land a = 0_L$ is $\mathcal{V}$-small for all $a : L$ and $b : B$ and 
    \begin{align*}
        \mathbb{T}(\phi) &\simeq \sum_{b : B} \sum_{a : L} b \leq^B c \land a = 0_L \\
        &\simeq \sum_{b : B} b \leq^B c \times \sum_{a : L} a = 0_L \\
        &\simeq \sum_{b : B} b \leq^B c \ \ , \ \text{since} \ \sum_{a : L} a = 0_L \ \text{is contractible}
    \end{align*}
    is $\mathcal{V}$-small. Thus, $\phi$ is bounded. 

\end{example}

A similar situations occurs for other simple functions like the identity function. In light of this peculiarity we may extend the boundedness restriction to monotone endomaps as follows.

\begin{definition}
    \label{def:f-bounded}
    We say a monotone endomap $f : L \to L$ is \textbf{bounded} if there exists a bounded generator $\phi : \mathcal{P}_\mathcal{U \sqcup V^+}(B \times L)$ with $\Gamma_\phi(x) = f(x)$, for all $x : L$.
\end{definition}

\section{Small-Presentation of a Lattice}
\label{sec:9}
In \cite{Curi} a further restriction of set-generated complete lattices is explored. We depart from Curi here and instead use an equivalent formulation of set presentation due to \cite{Aczel}. A complete lattice $L$ is set-presented if there is a subset $R \subseteq B \times \mathcal{P}(B)$ such that for any $b \in B$ and $X \subseteq B$
$$b \leq \bigvee X$$
iff 
$$\exists Y \subseteq X , (b , Y) \in R $$

We now explore the translation of this notion into type theory. For the remainder of this section we work in the context of a $\mathcal{V}$-generated sup lattice $L$ with basis $\beta : B \to L$.

\begin{definition}
    \label{def:small-presentation}
    A subset $R : \mathcal{P}_\mathcal{V} (B \times \mathcal{P}_\mathcal{V} (B))$ together with a family $Y : J \to \mathcal{P}_\mathcal{V}(B)$ for $J : \mathcal{V}$ is a $\mathcal{V}$\textbf{-presentation} of $L$ if for any $b : B$ and $X : \mathcal{P}_\mathcal{V} (B)$ 
    $$b \leq^B \bigvee X \longleftrightarrow \exists j : J , Y(j) \subseteq X \land (b , Y(j)) \in R.$$
    Finally, we say a $\mathcal{V}$-generated sup lattice with a basis $\beta : B \to L$ is $\mathcal{V}$\textbf{-presented} if it has a $\mathcal{V}$-presentation. For notational convenience, we also define a subset $R_j : \mathcal{P}_\mathcal{V}(B)$ by $b \mapsto (b , Y_j) \in R$ for each $j : J$.
\end{definition}

\begin{example}
    \label{ex:subset-small-pres}
    Recall, $\mathcal{P}_\mathcal{V}(A)$, where $A : \mathcal{V}$ is a set, has a basis given by $A$ and the map $\{ \_ \} : A \to \mathcal{P}_\mathcal{V} (A)$. Now $\_ \in \_ : \mathcal{P}_\mathcal{V}(A \times \mathcal{P}_\mathcal{V} (A))$ together with the same map $\{\_\} : A \to \mathcal{P}_\mathcal{V}(A)$ provides a small-presentation. To see this let $a : A$ and $X : \mathcal{P}_\mathcal{V}(A)$. Since $\bigvee X \equiv \bigcup \{\_\} \circ \text{inc}_X = X$, it suffices to show 
    $$\{a\} \subseteq X \longleftrightarrow \exists x : A , (\{x\} \subseteq X \land a \in \{x\}).$$
    Which is trivially true by taking $x :\equiv a$. 
\end{example}

\section{Predicative Least Fixed Point Theorem}
\label{sec:10}
In \cite{Curi} it is shown that given a set-presented complete lattice $L$ and a monotone endomap $f : L \to L$, if there exists a bounded abstract inductive definition $\Phi$ with $f (x) = \Gamma_\Phi (x)$, then $f$ has a least fixed point. This result hinges on showing that under these assumptions the least closed class $\mathcal{I}(\Phi)$ is in fact a set, then the least fixed point is $\bigvee \mathcal{I}(\Phi)$. We will now translate these results into type theory.

We will follow \cite{Curi} and show that the assumptions of $\mathcal{V}$-presentation and boundedness provide us with smallness of $\mathcal{I}_\phi$. In satisfying this smallness assumption there are two obstacles. The first obstacle is that the constructor c-cl of $\mathcal{I}_\phi$ quantifies over $\mathcal{P}_\mathcal{V}(B)$ which lives in the universe $\mathcal{V^+}$. This large quantification can be avoided by appealing to the $\mathcal{V}$-presentation assumption. The second obstacle is that the constructor $\phi$-cl of $\mathcal{I}_\phi$ quantifies over the carrier $L$ which lives in the universe $\mathcal{U}$. This large quantification can be avoided by appealing to the $\phi$-bounded assumption. In fact, we will now define a new QIT family and show that it is equivalent to $\mathcal{I}_\phi$ --- with in the context of the above assumptions. 

\begin{definition}
    \label{def:Small-QIT}
    Let $L$ be a $\mathcal{V}$-sup-lattice with a $\mathcal{V}$-presentation $(J , Y , R)$ and a bounded generator $\phi$ with bound $(I , \alpha)$. For any $b : B$ and $a : L$, let $(b , a)^\mathcal{V} \in \phi$ be the $\mathcal{V}$-small type guaranteed by the assumption that $\phi$ is bounded. First, we define similar shorthand as before. For $S : \mathcal{P}_\mathcal{T}(B)$
    $$\text{is-c-closed}^\mathcal{V}(S) :\equiv \prod_{j : J} Y(j) \subseteq S \to R_j \subseteq S $$
    and 
    $$\text{is-}\phi\text{-closed}^\mathcal{V}(S) \equiv \prod_{b : B} \prod_{i : I} \prod_{m : \alpha(i) \to B} (b , \bigvee \alpha(i))^\mathcal{V} \in \phi 
    \to \dBV \bigvee \alpha(i) \subseteq S 
    \to b \in S.$$
    We define the QIT family $\mathcal{I}^\mathcal{V}_\phi : B \to \mathcal{V}$, called the \textbf{least closed small subset under containment and} $\phi$, which has the following constructors:
    \begin{enumerate}
        \item $\mathcal{I}^\mathcal{V}\text{-trunc} : \prod_{b : B} \text{is-prop} (\mathcal{I}^\mathcal{V}_\phi (b))$,
        \item $\text{c-cl}^\mathcal{V} : \text{is-c-closed}^\mathcal{V}(\mathcal{I}^\mathcal{V}_\phi)$,
        \item $\phi\text{-cl}^\mathcal{V} : \text{is-}\phi\text{-closed}^\mathcal{V}(\mathcal{I}^\mathcal{V}_\phi)$.
    \end{enumerate}
    The $\mathcal{I}^\mathcal{V}\text{-trunc}$ constructor guarantees that $\mathcal{I}^\mathcal{V}_\phi : \mathcal{P}_\mathcal{V}(B)$. We will state the initiality principle of $\mathcal{I}^\mathcal{V}_\phi$, just as we did in \Cref{def:QIT}:
    $$\mathcal{I}^\mathcal{V}\text{-initial} : \prod_{P : \mathcal{P}_\mathcal{T}(B)} \text{is-c-closed}^\mathcal{V} (P) \to \text{is-}\phi\text{-closed}^\mathcal{V}(P) \to \mathcal{I}^\mathcal{V}_\phi \subseteq P.$$
\end{definition}

We will now show that, under the assumptions of small presentation and boundedness, the total spaces of $\mathcal{I}^\mathcal{V}_\phi$ and $\mathcal{I}_\phi$ are equivalent. This is because, under these assumptions, the constructors of either QIT family are inter-derivable.

\begin{proposition}
    \label{prop:QIT-is-small}
    For a $\mathcal{V}$-sup-lattice $L$ with a $\mathcal{V}$-presentation and a bounded generator $\phi$, the type $b \in \mathcal{I}_\phi$ is $\mathcal{V}$-small for all $b : B$. 
\end{proposition}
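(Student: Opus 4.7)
The plan is to establish an inter-derivability between the constructors of $\mathcal{I}_\phi$ and $\mathcal{I}^\mathcal{V}_\phi$ under the assumptions, and conclude by showing that these two subsets of $B$ contain each other. Since $\mathcal{I}^\mathcal{V}_\phi$ is $\mathcal{V}$-valued by construction, the mutual containment then witnesses $\mathcal{V}$-smallness of $b \in \mathcal{I}_\phi$ for every $b : B$.

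First I would prove $\mathcal{I}_\phi \subseteq \mathcal{I}^\mathcal{V}_\phi$ by invoking the initiality of $\mathcal{I}_\phi$. This requires showing that $\mathcal{I}^\mathcal{V}_\phi$ is both c-closed and $\phi$-closed in the original (large) sense. For c-closure, take $U : \mathcal{P}_\mathcal{V}(B)$ with $U \subseteq \mathcal{I}^\mathcal{V}_\phi$ and $b \in \dBV \bigvee U$; by the $\mathcal{V}$-presentation there merely exists $j : J$ with $Y(j) \subseteq U$ and $b \in R_j$, and then c-closure$^\mathcal{V}$ of $\mathcal{I}^\mathcal{V}_\phi$ gives $b \in \mathcal{I}^\mathcal{V}_\phi$. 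For $\phi$-closure, given $(b,a) \in \phi$ and $\dB a \subseteq \mathcal{I}^\mathcal{V}_\phi$, boundedness of $\phi$ merely produces $i : I$ and a surjection $\alpha(i) \twoheadrightarrow \dB a$; by \Cref{prop:surj-same-sup} we have $\bigvee \alpha(i) = a$, so the hypothesis transports to $\dBV \bigvee \alpha(i) \subseteq \mathcal{I}^\mathcal{V}_\phi$, and transporting $(b,a) \in \phi$ to its $\mathcal{V}$-small counterpart $(b, \bigvee \alpha(i))^\mathcal{V} \in \phi$ lets $\phi$-closure$^\mathcal{V}$ conclude $b \in \mathcal{I}^\mathcal{V}_\phi$. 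Throughout, propositionality of membership licenses extracting the data from the existential quantifiers.

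Second I would prove $\mathcal{I}^\mathcal{V}_\phi \subseteq \mathcal{I}_\phi$ by invoking the initiality of $\mathcal{I}^\mathcal{V}_\phi$, which needs $\mathcal{I}_\phi$ to be c-closed$^\mathcal{V}$ and $\phi$-closed$^\mathcal{V}$. For c-closure$^\mathcal{V}$, if $Y(j) \subseteq \mathcal{I}_\phi$ and $b \in R_j$, then by the presentation $b \leq^B \bigvee Y(j)$, i.e.\ $b \in \dBV \bigvee Y(j)$; then the ordinary c-closure of $\mathcal{I}_\phi$ applied to $Y(j)$ yields $b \in \mathcal{I}_\phi$. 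For $\phi$-closure$^\mathcal{V}$, given $(b, \bigvee \alpha(i))^\mathcal{V} \in \phi$ together with $\dBV \bigvee \alpha(i) \subseteq \mathcal{I}_\phi$, I would transport back along the small-type equivalence to obtain $(b, \bigvee \alpha(i)) \in \phi$ and invoke the ordinary $\phi$-closure of $\mathcal{I}_\phi$ at $a :\equiv \bigvee \alpha(i)$, using that $\dB a$ and $\dBV a$ are equivalent as subsets of $B$.

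The main obstacle is the $\phi$-closure direction of the first containment: we must pass from a pair $(b,a) \in \phi$ with arbitrary $a : L$ to a pair of the form $(b, \bigvee \alpha(i))^\mathcal{V} \in \phi$, so that the small closure constructor applies. This is precisely the purpose of boundedness together with \Cref{prop:surj-same-sup}, but it requires careful bookkeeping: unpacking the mere existence of the small cover, transporting membership in $\phi$ along $a = \bigvee \alpha(i)$, and then transporting along the small-type equivalence $(b,a) \in \phi \simeq (b,a)^\mathcal{V} \in \phi$. Once both inclusions are in hand, combining them with \Cref{def:QIT} and \Cref{def:Small-QIT} gives an equivalence $b \in \mathcal{I}_\phi \simeq \mathcal{I}^\mathcal{V}_\phi(b)$ with codomain in $\mathcal{V}$, which is the required smallness witness.
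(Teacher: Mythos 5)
Your proposal matches the paper's proof: both establish the mutual containment $\mathcal{I}_\phi \subseteq \mathcal{I}^\mathcal{V}_\phi$ and $\mathcal{I}^\mathcal{V}_\phi \subseteq \mathcal{I}_\phi$ by appealing to each subset's initiality, using the $\mathcal{V}$-presentation to bridge the c-closure constructors and boundedness (via \Cref{prop:surj-same-sup}) to bridge the $\phi$-closure constructors, then read off $\mathcal{V}$-smallness from the resulting logical equivalence of propositions. The only unstated detail, also suppressed in the paper, is that the map $m : \alpha(i) \to B$ fed into $\phi\text{-cl}^\mathcal{V}$ is the composition of the surjection with the projection $\dB a \to B$.
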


\begin{proof}
    Suppose that $(J , Y , R)$ is a $\mathcal{V}$-presentation of $L$ and $(I , \alpha)$ is a bound for $\phi$. It suffices to show $b \in \mathcal{I}_\phi \longleftrightarrow b \in \mathcal{I}^\mathcal{V}_\phi$, for all $b : B$. For this we can simply show $\mathcal{I}_\phi \subseteq \mathcal{I}^\mathcal{V}_\phi$ and $\mathcal{I}^\mathcal{V}_\phi \subseteq \mathcal{I}_\phi$, utilizing the initiality of either subset.
    
    For the former, we will apply $\mathcal{I}$-initial to $\mathcal{I}^\mathcal{V}_\phi$ as well as two functions
    $$\prod_{U : \mathcal{P}_\mathcal{V}(B)} U \subseteq \mathcal{I}^\mathcal{V}_\phi \to \dBV \bigvee U \subseteq \mathcal{I}^\mathcal{V}_\phi$$
    and
    $$\prod_{a : L} \prod_{b : B} (b , a) \in \phi \to \dBV a \subseteq \mathcal{I}^\mathcal{V}_\phi \to b \in \mathcal{I}^\mathcal{V}_\phi.$$
    In the interest of defining the first function, we assume $U : \mathcal{P}_\mathcal{V}(B)$ with $U \subseteq \mathcal{I}^\mathcal{V}_\phi$ as well as $b : B$ with $b \leq^B \bigvee U$. By $\mathcal{V}$-presentation there exists $j : J$ such that $Y(j) \subseteq U$ and $(b , Y(j)) \in R$ (or equivalently $b \in R_j$). Now, by transitivity, we have $Y(j) \subseteq \mathcal{I}^\mathcal{V}_\phi$, so we can apply $\text{c-cl}^\mathcal{V}$ to conclude $b \in \mathcal{I}^\mathcal{V}_\phi$, as desired. In the interest of defining the second function, we assume $a : L$ and $b : B$ with $(b , a) \in \phi$ and $\dBV a \subseteq \mathcal{I}^\mathcal{V}_\phi$. By $\phi$-boundedness there exists $i : I$ such that $s : \alpha(i) \twoheadrightarrow \dB a$. By \Cref{def:basis} and \Cref{prop:surj-same-sup} we have $$a = \bigvee \dB a = \bigvee \alpha(i).$$
    Thus, we conclude $(b , \bigvee \alpha(i)) \in \phi$ and $\dBV \bigvee \alpha(i) \subseteq \mathcal{I}^\mathcal{V}_\phi$. Now, by the equivalence we have $(b , \bigvee \alpha(i))^{\mathcal{V}} \in \phi$ so we can apply $\phi$-cl$^\mathcal{V}$ to conclude $b \in \mathcal{I}^\mathcal{V}_\phi$, as desired.

    For the latter, we will apply $\mathcal{I}^\mathcal{V}$-initial to $\mathcal{I}_\phi$ as well as two functions
    $$\prod_{j : J} Y(j) \subseteq \mathcal{I}_\phi \to R_j \subseteq \mathcal{I}_\phi$$
    and
    $$\prod_{b : B} \prod_{i : I} \prod_{m : \alpha(i) \to B} (b , \bigvee \alpha(i))^\mathcal{V} \in \phi \to \dBV \bigvee \alpha(i) \subseteq \mathcal{I}_\phi \to b \in \mathcal{I}_\phi.$$
    In the interest of defining the first function, we assume $j : J$ with $Y(j) \subseteq \mathcal{I}_\phi$ as well as $b : B$ with $(b , Y(j)) \in R$. Of course, $Y(j) \subseteq Y(j)$ so by $\mathcal{V}$-presentation we have $b \leq^B \bigvee Y(j)$. Thus, by c-cl we conclude $b \in \mathcal{I}_\phi$, as desired. In the interest of defining the second function, we assume $b : B$, $i : I$ and $m : \alpha(i) \to B$ with $(b , \bigvee \alpha(i))^\mathcal{V} \in \phi$ and $\dBV \bigvee \alpha(i) \subseteq \mathcal{I}_\phi$. Of course, $\bigvee \alpha(i) : L$ and $(b , \bigvee \alpha(i)) \in \phi$ so we immediately conclude by $\phi$-cl that $b \in \mathcal{I}_\phi$, as desired. This completes the proof.
\end{proof}

As a corollary to the above proposition we get a predicative version of Tarski's Least Fixed Point Theorem.

\begin{corollary}
    \label{cor:Tarski}
    Let $L$ be a $\mathcal{V}$-presented sup lattice and $f : L \to L$ a monotone endomap. If $f$ is bounded (see \Cref{def:f-bounded}), then $f$ has a least fixed point.
\end{corollary}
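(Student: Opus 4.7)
The plan is to assemble the corollary directly from the machinery already developed, with no genuinely new content: the hard work has been done in \Cref{prop:QIT-is-small} and \Cref{prop:small-QIT-give-fixed-point}, and it remains only to thread the hypotheses through.

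First, I would unfold the hypothesis that $f$ is bounded via \Cref{def:f-bounded}: this gives a bounded generator $\phi : \mathcal{P}_{\mathcal{U} \sqcup \mathcal{V}^+}(B \times L)$ such that $\Gamma_\phi(x) = f(x)$ for all $x : L$. Since $\phi$ is bounded, \Cref{prop:bounded-implies-local} guarantees that $\phi$ is local, so the induced monotone operator $\Gamma_\phi$ is actually well defined as in \Cref{def:Gamma}. At this point we have traded the problem about $f$ for the analogous problem about $\Gamma_\phi$.

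Next, I would invoke the $\mathcal{V}$-presentation hypothesis on $L$ together with the boundedness of $\phi$, which are precisely the assumptions of \Cref{prop:QIT-is-small}. This yields that $b \in \mathcal{I}_\phi$ is $\mathcal{V}$-small for every $b : B$. That smallness assumption is, in turn, exactly the hypothesis of \Cref{prop:small-QIT-give-fixed-point}, which then delivers an element $p : L$ such that $\Gamma_\phi(p) = p$ and $p \leq x$ whenever $\Gamma_\phi(x) = x$.

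Finally, because $\Gamma_\phi(x) = f(x)$ pointwise, the predicate ``fixed point of $\Gamma_\phi$'' coincides with ``fixed point of $f$'', so $p$ is a fixed point of $f$ and is below every other fixed point of $f$. There is no real obstacle in this last argument; if any step deserves attention in a careful write-up, it is the bookkeeping that the equality $\Gamma_\phi(x) = f(x)$ really does transport the least fixed point property, which is immediate since being a (least) fixed point is expressible purely in terms of the values of the map.
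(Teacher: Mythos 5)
Your proposal is correct and follows the same route as the paper: unfold boundedness of $f$ to get a bounded generator $\phi$ with $\Gamma_\phi = f$, apply \Cref{prop:QIT-is-small} to get $\mathcal{V}$-smallness of $b \in \mathcal{I}_\phi$, and then \Cref{prop:small-QIT-give-fixed-point} to obtain the least fixed point, which transfers to $f$ pointwise. Your explicit appeal to \Cref{prop:bounded-implies-local} to ensure $\Gamma_\phi$ is well defined is a detail the paper leaves implicit, but it is the same argument.
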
 

\begin{proof}
    By \Cref{prop:QIT-is-small} we have that $b \in \mathcal{I}_\phi$ is $\mathcal{V}$-small for all $b : B$. Now applying \Cref{prop:small-QIT-give-fixed-point} we conclude that $\Gamma_\phi$, and by extension $f$, has a least fixed point.
\end{proof}

Inspired by \Cref{ex:cannon-gen-poor} we will now investigate a condition that guarantees a monotone map is bounded with respect to a generator. This will allow us to give another version of the least fixed point theorem.

\begin{definition}
    \label{def:dense}
    Let $L$ be a $\mathcal{V}$-generated sup lattice. We say a monotone map $f : L \to L$ is \textbf{dense} if there is a family $\gamma : V \to L$ with $V : \mathcal{V}$ such that for all $b : B$ and $a : L$
    $$b \leq^B f(a) \to \exists v : V, b \leq^B f(\gamma(v)) \land \gamma(v) \leq a.$$
    We call $\gamma$ the \textbf{dense family}.
\end{definition}

\begin{remark}
    \label{rem:5}
    The reader may be asking themselves why we introduced the dense family $\gamma : V \to L$ when we already have a basis family. Recall, that many concrete examples of $\mathcal{V}$-generated sup lattice have multiple different possible bases --- some of which are more natural than others. For example, the canonical basis for the powerset is the singleton basis. Notice for a monotone map $f : \mathcal{P}_\mathcal{V}(A) \to \mathcal{P}_\mathcal{V}(A)$, where $A$ is a set, if we use the singleton basis as the dense family then $f$ is dense if for any $a : A$ and $S : \mathcal{P}_\mathcal{V}(A)$
    $$a \in f(S) \to \exists x : A, a \in f(\{x\}) \land x \in S.$$ Notice this implies that $f(\emptyset) = \emptyset$, since $a \in f(\emptyset) \to \bot$. Hence if $f$ is dense with respect to the singleton basis then $f$ has a trivial least fixed point and as such is not very insightful.

    Alternatively, there is another basis $\beta : \text{List} (A) \to \mathcal{P}_\mathcal{V} (A)$, the finite subset basis (see \Cref{ex:pow-basis}). Notice when we allow $\beta$ to be the dense family, then $f$ is dense if for any $b : B$ and $S : \mathcal{P}_\mathcal{V} (A)$
    $$b \in f(S) \to \exists l : \text{List}(A), b \in f(\beta(l)) \land \beta(l) \subseteq S.$$
    Interestingly, density with respect to $\text{List} (A)$ is equivalent to Scott continuity, which says $f(\bigcup \alpha) = \bigcup f \circ \alpha$, where $\alpha$ is a directed family (see \cite[Section 3.3]{Jong2023DomainTI}). 

    Finally, note that there may be other families, unrelated to the basis, that provide even more interesting density conditions. For these reasons we leave the choice of basis and dense family independent.
\end{remark}

In light of \Cref{rem:5} the reader may be curious about the strength of density as an assumption. The next example will showcase that density, in the case of the sup lattice $\mathcal{P}_\mathcal{V}(A)$, where $A : \mathcal{V}$ is a set, is a strictly weaker assumption than Scott continuity.

\begin{example}
    \label{ex:Dense-vs-Scott}
    For the purpose of this example we say a set is countable if it is equipped with an enumeration. Analogous to the equivalence of Scott continuity and density with respect to $\text{List}(A)$ (see \Cref{rem:5}) we can show that density with respect to countable subsets of $A$ is equivalent to continuity of countably directed families. The precise definition of the dense family $\gamma : (\mathbb{N} \to A + 1) \to \mathcal{P}_\mathcal{V}(A)$ is given by
    $$z \mapsto \bigcup \bar{\{\_\}} \circ z$$ 
    where $\bar{\{\_\}} : A + 1 \to \mathcal{P}_\mathcal{V}(A)$ via $a \mapsto \{a\}$ or $\star \mapsto \emptyset$.
    
    Let $f : \mathcal{P}_\mathcal{V}(A) \to \mathcal{P}_\mathcal{V}(A)$ be dense with respect to $\gamma$ and $\alpha : I \to \mathcal{P}_\mathcal{V}(A)$ be a countably directed family. Using monotonicity it is routine to argue that $\bigcup f \circ \alpha \subseteq f(\bigcup \alpha)$. To show the other containment we suppose $x \in f(\bigcup \alpha)$ and notice by density
    $$\exists z : \mathbb{N} \to A + 1, x \in f(\gamma(z)) \land \gamma(z) \subseteq \bigcup \alpha.$$
    Since $\gamma(z)$ is countable we can apply the countable directedness of $\alpha$ to conclude $\exists i : I, \gamma(z) \subseteq \alpha(i)$, which in turn would yield
    $$\exists i : I, x \in f(\alpha(i))$$
    so we conclude that $x \in \bigcup f \circ \alpha$. Conversely, suppose $f$ preserves countably directed unions and let $x : A$ and $S : \mathcal{P}_\mathcal{V}(A)$ with $x \in f(S)$. Now, we can define $s : (\mathbb{N} \to \mathbb{T}(S) + 1) \to \mathcal{P}_\mathcal{V}(A)$ as follows
    $$z \mapsto \bigcup \bar{\{\_\}} \circ z$$
    which is the family of countable subsets of $S$. This family $s$ is countably directed (here we make essential use of the fact that countable sets come equipped with enumerations to show that the countable union of countable sets is countable) and $S = \bigcup s$. By assumption of preservation of countably directed unions we have $x \in \bigcup f \circ s$. Thus, there is $z' : \mathbb{N} \to \mathbb{T}(S) + 1$ such that $x \in f(s(z'))$ and $s(z') \subseteq S$. We can convert $z'$ to a $z : \mathbb{N} \to A + 1$ such that $s(z') = \gamma(z)$. Hence, 
    $$\exists z : \mathbb{N} \to A + 1, x \in f(\gamma(z)) \land \gamma(z) \subseteq S$$
    $f$ is dense with respect to $\gamma$.

\end{example}

In light of \Cref{ex:Dense-vs-Scott}, since preservation of countably directed families is strictly weaker than Scott continuity (?), we see that density is strictly weaker than Scott continuity. We now state our final result which shows that any dense monotone map is bounded and as a result has a least fixed point. 

\begin{proposition}
    \label{prop:dense-gives-bounded}
    Let $L$ be a $\mathcal{V}$-presented and locally $\mathcal{V}$-small sup lattice and $f : L \to L$ a monotone map. If $f$ is dense, then $f$ is bounded.
\end{proposition}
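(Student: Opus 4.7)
The plan is to mimic the strategy of \Cref{ex:cannon-gen-poor}: instead of the ill-behaved canonical generator $(b,a) \in \phi :\equiv b \leq^B f(a)$, I would cut down the second coordinate of $\phi$ to land in the dense family $\gamma : V \to L$, which by hypothesis is $\mathcal{V}$-indexed. Concretely, define $\phi : \mathcal{P}_{\mathcal{U} \sqcup \mathcal{V}^+}(B \times L)$ by
$$ (b,a) \in \phi \;:\equiv\; \exists\, v : V,\; a = \gamma(v) \,\land\, b \leq^B f(\gamma(v)) $$
(lifting to the ambient universe if needed). The rest of the proof is the verification of the three clauses required for $f$ to be bounded: that $\Gamma_\phi = f$ pointwise, that $(b,a) \in \phi$ is $\mathcal{V}$-small, and that $\phi$ has a $\mathcal{V}$-indexed bound.

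For $\Gamma_\phi(x) = f(x)$ I would use \Cref{prop:surj-same-sup} together with the basis condition to reduce to showing that $S_{\phi, x}$ and $\dBV f(x)$ have the same elements as subsets of $B$. If $b \in S_{\phi, x}$, then unpacking gives $v : V$ with $b \leq^B f(\gamma(v))$ and $\gamma(v) \leq x$, and monotonicity of $f$ plus transitivity (after the usual translation between $\leq^B$ and $\leq$ via \Cref{def:basis}) yields $b \leq^B f(x)$. Conversely, from $b \leq^B f(x)$ density produces $v : V$ with $b \leq^B f(\gamma(v))$ and $\gamma(v) \leq x$, witnessing $b \in S_{\phi,x}$ with $a' :\equiv \gamma(v)$. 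Smallness of $(b,a) \in \phi$ then follows immediately: local $\mathcal{V}$-smallness of $L$ makes $a = \gamma(v)$ small, the basis condition makes $b \leq^B f(\gamma(v))$ small, $V : \mathcal{V}$, and small propositions are closed under $\exists$ over small types. For the bound, take $I :\equiv V$ and $\alpha(v) :\equiv \dBV \gamma(v)$, which is a $\mathcal{V}$-valued family. Given $(b,a) \in \phi$, there merely exists $v : V$ with $a = \gamma(v)$, and then the obvious equivalence $\alpha(v) = \dBV \gamma(v) \simeq \dB \gamma(v) = \dB a$ supplies the required surjection $\alpha(v) \twoheadrightarrow \dB a$.

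The main obstacle I expect is bookkeeping in the equality $\Gamma_\phi(x) = f(x)$: both sides are defined as joins of families that are only equivalent (not equal) as types, so one must be disciplined about invoking \Cref{prop:surj-same-sup} or \Cref{cor:equiv-same-sup} at each reindexing, and about translating between $\leq$ and $\leq^B$ via the equivalence of \Cref{def:basis} (cf.\ \Cref{rem:5}). Once this equality is in hand the two boundedness clauses drop out because every ingredient in the definition of $\phi$ is already $\mathcal{V}$-small, and hence $f$ is bounded in the sense of \Cref{def:f-bounded}.
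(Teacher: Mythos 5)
Your proposal is correct and follows essentially the same route as the paper: you construct the same generator (your $\phi$ is propositionally equivalent to the paper's, which uses $b \leq^B f(a)$ in place of $b \leq^B f(\gamma(v))$, an interchangeable choice once $a = \gamma(v)$ holds), use $\alpha(v) :\equiv \dBV \gamma(v)$ as the bound, derive smallness from local $\mathcal{V}$-smallness plus the basis condition, and establish $\Gamma_\phi = f$ by showing $S_{\phi,a}$ and $\dBV f(a)$ have the same elements via density and monotonicity. The bookkeeping concerns you flag about reindexing via \Cref{prop:surj-same-sup} and translating between $\leq$ and $\leq^B$ are real but handled exactly as the paper does.
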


\begin{proof}
    We start by constructing a bounded generator and then we show it induces $f$. Define $\phi : \mathcal{P}_\mathcal{U \sqcup V^+}(B \times L)$ by $(b , a) \in \phi :\equiv \exists v : V, b \leq^B f(a) \land a = \gamma(v)$. Clearly, $(b , a) \in \phi$ is $\mathcal{V}$-small since $L$ is locally $\mathcal{V}$-small. Notice that $\alpha : V \to \mathcal{V}$ defined by $\alpha(x) :\equiv \dBV \gamma(x)$ is a bound. To see this, consider any $b : B$ and $a : L$. If $\exists v : V, b \leq^B f(a) \land a = \gamma(v)$ then $\exists v : V, \dBV \gamma(v) \simeq \dB a$ and this can be demoted to a surjection $\dBV \gamma(v) \twoheadrightarrow \dB a$. Finally, notice that 
    \begin{align*}
        S_{\phi,a} &\equiv \sum_{b : B} \exists a' : L, \exists v : V, b \leq^B f(a') \land a' = \gamma(v) \land a' \leq a \\
        &\simeq \sum_{b : B} \exists v : V, b \leq^B f(\gamma(v)) \land \gamma(v) \leq a \\
        &\simeq \sum_{b : B} b \leq^B f(a) \\
        &\equiv \downarrow^B_\mathcal{V} f(a)
    \end{align*}
    where the last equivalence only holds since $f$ is dense. Thus,
    $$\Gamma_\phi (a) = \bigvee S_{\phi,a} = \bigvee \dBV f(a) = f(a)$$
    which shows that $f$ is bounded. 
\end{proof}

\begin{corollary}
    \label{cor:Tarski-from-dense}
    Let $L$ be a $\mathcal{V}$-presented and locally $\mathcal{V}$-small sup lattice and $f : L \to L$ a monotone map. If $f$ is dense, then $f$ has a least fixed point.
\end{corollary}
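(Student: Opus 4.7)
The plan is to chain two previously established results: density implies boundedness (\Cref{prop:dense-gives-bounded}), and boundedness of a monotone map yields a least fixed point (\Cref{cor:Tarski}). Since the hypotheses of $\mathcal{V}$-presentation and local $\mathcal{V}$-smallness are carried forward verbatim from the statement of this corollary, essentially no additional work is required beyond invoking the two results in sequence.

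Concretely, I would first apply \Cref{prop:dense-gives-bounded} to the map $f$. The hypotheses of that proposition — namely that $L$ is $\mathcal{V}$-presented and locally $\mathcal{V}$-small, and that $f$ is dense — are precisely what we are assuming. This produces a bounded generator $\phi : \mathcal{P}_{\mathcal{U} \sqcup \mathcal{V}^+}(B \times L)$ with $\Gamma_\phi(x) = f(x)$ for all $x : L$, which by \Cref{def:f-bounded} is exactly what it means for $f$ to be bounded. Then I would apply \Cref{cor:Tarski} with this witness of boundedness, together with the $\mathcal{V}$-presentation assumption (re-used from our hypothesis), to conclude that $f$ has a least fixed point.

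There is essentially no obstacle here — this is a direct corollary of the preceding two results and requires no new constructions. The only subtle point worth remarking on is the role of the two smallness hypotheses: local $\mathcal{V}$-smallness of $L$ is used solely in the passage from density to boundedness (it ensures the membership predicate of the generator $(b,a) \in \phi :\equiv \exists v : V,\, b \leq^B f(a) \land a = \gamma(v)$ is $\mathcal{V}$-small), whereas $\mathcal{V}$-presentation of $L$ is what \Cref{cor:Tarski} itself relies on via \Cref{prop:QIT-is-small}. Both assumptions are therefore essential and play distinct roles, even though the final deduction is a one-line chaining.
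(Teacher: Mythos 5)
Your proposal is correct and is essentially identical to the paper's own one-line proof, which simply chains \Cref{prop:dense-gives-bounded} and \Cref{cor:Tarski}. Your additional remarks on which hypothesis each lemma actually consumes are accurate and helpful but do not change the argument.
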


\begin{proof}
    By \Cref{prop:dense-gives-bounded} we have that $f$ is bounded, so by \Cref{cor:Tarski} we conclude that $f$ has a least fixed point.
\end{proof}

\section{Conclusion}
\label{sec:11}
We have successfully translated a constructive and predicative version of the least fixed point theorem into type theory. It is apparent that inductive types, particularly quotient inductive types (QITs), played a crucial role in our development. Essentially, we are skirting the constructions of subsets that are initial with respect to certain closure properties. It is worth noting that QITs, and more generally higher inductive types (HITs), are recent developments and an area of active research. Nonetheless, they are often viewed as fitting with in the philosophical framework of (generalised) predicativity as a reasonable extension of inductive constructions. For this reason one may argue that QITs are constructively and predicatively admissible and as such this work provides a step towards the \say{system independent} derivation that Giovanni Curi called for in the conclusion of \cite{Curi}. 

There are many avenues of future research. One obvious question is can we completely salvage a predicative version of Tarski's fixed point theorem? Tarski's fixed point theorem states that the collection of fixed points of a monotone map forms a complete lattice. To achieve such a result we need to also construct a greatest fixed point. In fact, Curi has already done this in \cite{Curi2}. The approach dualizes the present work by considering coinductive definitions. We intend to explore how this idea translates into type theory via coinductive types. Following this, we would like to explore if the resulting complete lattice is itself small generated and small presented.

Avenues of further inquiry more directly related to the present work would be to determine how common $\mathcal{V}$-presented sup lattices and bounded maps really are. In light of \Cref{ex:cannon-gen-poor} it seems difficult to come up with interesting examples that satisfy the hypothesis of \Cref{cor:Tarski}. In the interest of showing the non-triviality of the result, we intend to explore and find some interesting examples and applications. Inspired by \Cref{prop:dense-gives-bounded}, we intend to investigate a streamlined approach to cooking up (bounded) generators that yield functions of interest. Lastly, we would like to apply the results of this note to other complete poset structures such frames, DCPOs, etc. If variations of this result hold for such structures we may find applications in the areas of Formal Topology and Domain Theory. There are two particularly promising directions to explore. The first is to give a predicative version of a result from locale theory: the nucleus of a frame has a least fixed point \cite[Section II.2]{Johnstone}. The other would be to investigate a predicative version of Pataria's fixed point theorem \cite{Esc}, which is essentially Tarski's fixed point theorem for DCPOs.

\pagebreak

\printbibliography[heading=bibintoc]

\end{document}